\documentclass[final,5p,times,twocolumn]{elsarticle}

\usepackage[T1]{fontenc}
\usepackage{amsmath,amssymb,mathtools,amsthm}
\usepackage{tikz-cd}
\usepackage{graphicx} 
\usepackage{float}    
\usepackage{microtype}
\microtypesetup{expansion=false}
\usepackage[hidelinks]{hyperref}
\usepackage{xcolor}
\usepackage{comment}
\journal{Systems \& Control Letters}
\numberwithin{equation}{section}
\newtheorem{theorem}{Theorem}
\newtheorem{proposition}[theorem]{Proposition}
\newtheorem{lemma}[theorem]{Lemma}
\theoremstyle{remark}
\newtheorem{remark}[theorem]{Remark}

\newcommand{\A}{{\cal C}}
\newcommand{\C}{\mathbb C}

\newcommand{\X}{\mathcal X}
\newcommand{\rank}{\operatorname{rank}}

\def \beq{\begin{equation}}
\def \eeq{\end{equation}}

\begin{document}

\begin{frontmatter}

\title{Boundary null controllability of weakly coupled structurally damped beams}

\author[aff1]{Julian Edward\corref{cor1}}\ead{edwardj@fiu.edu}\author[aff2,aff3]{Yacouba Simpore}\cortext[cor1]{Corresponding author.}\affiliation[aff1]{organization={Department of Mathematics and Statistics, Florida International University}, city={Miami}, state={FL}, postcode={33199}, country={USA}}

\affiliation[aff2]{organization={Universit\'e Yembila Abdoulaye Toguyeni},city={Fada N'Gourma}, country={Burkina Faso}}

\affiliation[aff3]{organization={Chair for Dynamics, Control, Machine Learning and Numerics, Friedrich-Alexander-Universit\"at Erlangen--N\"urnberg}, city={Erlangen}, postcode={91058}, country={Germany}}

\begin{abstract}
We prove boundary null controllability in any positive time for a finite system of Euler--Bernoulli beams with possibly non-diagonalizable coupling. For $0<\rho<2$, a Fourier--Jordan reduction yields a vector moment problem with polynomial--exponential modes. Under the Fattorini--Hautus condition, global spectral non-collision, and nonvanishing modal transfer
factors, a suitably estimated block-biorthogonal family produces an $H^2$ boundary control. Geometric multiplicities of the coupling matrix determine the minimal control dimension,  and Jordan-block lengths determine the degrees of the
 generalized moments. At $\rho=2$, an exact heat-system reduction gives the same result for $A=A^*\geq0$, despite the time-differential linkage
of the reduced boundary inputs.
\end{abstract}

\begin{keyword}
Boundary null controllability \sep Structurally damped beams
\sep Moment method \sep Fattorini--Hautus condition
\sep Jordan chains 
\end{keyword}

\end{frontmatter}

\section{Introduction}
Motivated by its importance in engineering, control and stabilization problems for systems of Euler--Bernoulli beams have been the object of much study, see for instance \cite{LLS}, \cite{DZ}, \cite{GW}. However, it seems that very little is known about the behaviour of such systems in the presence of structural damping. 

Let $L=-\partial_{xx}$ be the Dirichlet Laplacian in $L^2(0,\pi)$ and
$Q=(0,\pi)\times(0,T)$. We consider
\begin{equation}
\label{eq:beam}
\left\{
\begin{aligned}
 u_{tt}+L^2u+\rho Lu_t+Au&=0 &&\text{in }Q,\\
 u(0,t)=u_{xx}(0,t)&=0,\\
 u(\pi,t)=Bf(t),\quad u_{xx}(\pi,t)&=0,\\
 u(\cdot,0)=u^0,\quad u_t(\cdot,0)&=u^1,
\end{aligned}
\right.
\end{equation}
where $A\in\C^{N\times N}$, $B\in\C^{N\times\ell}$ and
$f:(0,T)\to\C^\ell$. The left end is hinged and homogeneous; at the right
end, the displacement is prescribed in the directions selected by $B$, while
the bending moment remains zero.

Structural damping places the beam equation between hyperbolic and
parabolic dynamics. The displacement still obeys a second-order-in-time equation, but the damping acts at the same spatial order as the square root of the elastic operator and suppresses high frequencies at a Gaussian rate. This regime is natural for interconnected flexible structures controlled through a small number of boundary actuators. Our objective is to bring both displacement and velocity exactly to zero at a prescribed time.

Null controllability is governed by two mechanisms. Algebraically, every eigendirection of the coupling matrix must be detected by the control matrix, as expressed by the Fattorini--Hautus condition. Spectrally, separation of the spatial variables transforms the terminal conditions into infinitely many moments against the temporal roots of the damped beam. Their solvability requires a biorthogonal family whose growth is compensated by the Gaussian decay of the free solution.

For $0<\rho<2$, the roots form two complex parabolic branches with real part $-\rho m^2/2$. At $\rho=2$, the principal operator factorizes as $(\partial_t+L)^2$, and the beam equation can be reduced to a first-order parabolic system. Its boundary inputs remain linked as $(Bf,Bf')$, however, so controllability results with two independent parabolic controls do not directly apply.

Fourier--moment methods for Dirichlet control of scalar structurally damped beams were
developed in \cite{AE}. Stabilization for systems of strings or beams with indirect damping was studied in \cite{ACK}, \cite{A}, \cite{KLT}. Estimates for biorthogonal families and the effects of spectral condensation are studied in
\cite{ABGT}. Related controllability questions for
coupled parabolic systems are treated in
\cite{AKBGT11,Fernandez,Duprez}. The present problem combines these settings while allowing the coupling matrix to be non-diagonalizable.
A Jordan chain of length $r$ replaces a single exponential moment by
 $e^{\lambda t},\;te^{\lambda t},\ldots,t^{r-1}e^{\lambda t}.$
Thus geometric multiplicities determine the required number of control directions, whereas Jordan-block lengths determine the degrees of the generalized moments. Moreover, branches associated with distinct eigenvalues may be separated only by $O(m^{-2})$, which rules out a global uniform-gap argument.
We identify the minimal algebraic control dimension, establish the
Fourier--Jordan reduction and Gaussian terminal estimates, and solve the clustered moment problem by adapting the block method of
\cite{BoyerMorancey2023}. Finally, for $\rho=2$ and $A=A^*\ge0$, we show that the linkage between $f$ and $f'$ does not obstruct controllability.
Section~\ref{sec:problem} states the main results and the non-resonance conditions. Sections~\ref{sec:reduction}--\ref{sec:moments} treat $0<\rho<2$, and Section~\ref{sec:critical} treats $\rho=2$.

\section{Problem and main results}
\label{sec:problem}
The four temporal traces imposed below make
the lifting compatible with both the initial and terminal homogeneous states.

Let
$\varphi_m(x):=\sqrt{\frac{2}{\pi}}\sin(mx)$,
$L\varphi_m=m^2\varphi_m\hbox{, }m\ge1.$
For \(s\in\mathbb R\), define
\[
X^s
:=
\left\{
y=\sum_{m\ge1}y_m\varphi_m:
\sum_{m\ge1}m^{2s}|y_m|^2<\infty
\right\},
\]
\begin{equation}
\label{eq:Xs-norm}
\|y\|_{X^s}^2
:=
\sum_{m\ge1}m^{2s}|y_m|^2.
\end{equation}
and set \(\X^s:=(X^s)^N\). For \(s\ge0\), this agrees with
\(D(L^{s/2})\); for \(s<0\), it is understood as the corresponding
spectral completion. Finally,
\[
H_0^2(0,T;\C^\ell)
:=\{f\in H^2(0,T;\C^\ell):
f^{(q)}(0)=f^{(q)}(T)=0,\ q=0,1\}.
\]
This control space is dictated by the boundary lifting, not imposed merely for convenience. Indeed, substituting $u=\frac{x}{\pi}Bf+v$ creates the forcing term  $\frac{x}{\pi}Bf''$; hence $f''\in L^2$ is the natural requirement at the energy level used here. The traces at $t=0$ prevent the lifting from modifying the prescribed initial data. Those at $t=T$ ensure that nullity of the homogeneous
variable $v$ is equivalent to nullity of both $u$ and $u_t$. They will be enforced by only two additional scalar moment functions, independently of the number of beams and Jordan blocks. The lack of regularity of $u$ for $f\in L^2(0,T)$ has been observed in \cite{T1}, also \cite{AE}.

Let $\sigma(A)=\{\gamma_1,\ldots,\gamma_{p_0}\}$, not counting multiplicities. Denote by $g_j$ the
geometric multiplicity of $\gamma_j$ and by
$r_{j,1},\ldots,r_{j,g_j}$ its Jordan-block sizes; set
\begin{equation}
\label{eq:Jordan-data}
 g=\max_jg_j,
 \quad \nu_j=\max_{1\le k\le g_j}r_{j,k}.
\end{equation}
Thus the algebraic multiplicity of $\gamma_j$ is
$\sum_{k=1}^{g_j}r_{j,k}$. 
The Fattorini-Hautus (or equivalently  Kalman) condition is
\begin{equation}
\label{eq:Hautus}
 \rank[\lambda I_N-A\mid B]=N,
 \quad \lambda\in\sigma(A).
\end{equation}
Equivalently,
$\ker(\overline\lambda I_N-A^*)\cap\ker B^*=\{0\}$. In particular,
\eqref{eq:Hautus} implies $\ell\ge g$; equality corresponds to the minimal
number of control components.

\begin{proposition}
\label{prop:necessity}
If \eqref{eq:beam} is null controllable for every initial state, then
\eqref{eq:Hautus} holds. Consequently, at least $g$ scalar boundary controls
are necessary. Moreover, this lower bound is sharp.
\end{proposition}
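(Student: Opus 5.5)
Suppose \eqref{eq:Hautus} fails. Then there exist $\lambda\in\sigma(A)$ and $\xi\in\C^N\setminus\{0\}$ with $A^*\xi=\overline\lambda\,\xi$ and $B^*\xi=0$. The plan is to project the solution onto $\xi$ and show that this component obeys an uncontrolled scalar damped beam equation, which cannot be driven to rest.

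\textbf{Step 1: the projected equation.} Set $w(x,t):=\langle u(x,t),\xi\rangle_{\C^N}$. Since $\langle Au,\xi\rangle=\langle u,A^*\xi\rangle=\lambda w$, the function $w$ solves
\[
w_{tt}+L^2w+\rho Lw_t+\lambda w=0 .
\]
The left boundary conditions are unchanged. At the right end, $w(\pi,t)=\langle Bf,\xi\rangle=\langle f,B^*\xi\rangle=0$ and $w_{xx}(\pi,t)=0$. So $w$ obeys a homogeneous hinged–hinged scalar equation, and the control does not enter it at all.

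\textbf{Step 2: modal solution.} Expanding $w=\sum_m w_m(t)\varphi_m$ gives, for each $m$,
\[
w_m''+\rho m^2 w_m'+(m^4+\lambda)w_m=0 .
\]
Take initial data $u^0=\varphi_1\xi$ and $u^1=\mu\varphi_1\xi$, where $\mu$ is a root of $\mu^2+\rho\mu+1+\lambda=0$. This data lies in every energy space. Then $w_1(t)=|\xi|^2e^{\mu t}$, which never vanishes. Hence $w(\cdot,T)\neq0$ for any control $f$, and \eqref{eq:beam} is not null controllable.

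The only delicate point is making sense of the trace identities for the weak or transposition solution. I would handle this by testing against the adjoint eigenfunction $\varphi_1\overline\xi$. The boundary term that arises has the form $\langle Bf,\xi\rangle\,\partial_x^3\varphi_1(\pi)$, together with similar terms, and each is proportional to $B^*\xi=0$. So the weak formulation yields the ODE for $w_1$ directly.

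\textbf{Lower bound $\ell\ge g$.} Let $\gamma_j$ be an eigenvalue with geometric multiplicity $g_j$. Then $\ker(\overline\gamma_j I-A^*)$ has dimension $g_j$. Condition \eqref{eq:Hautus} requires that $B^*$ be injective on this subspace, so $\ell=\rank B^*\ge g_j$. Taking the maximum over $j$ gives $\ell\ge g$.

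\textbf{Sharpness.} Sharpness follows from the main sufficiency theorem later in the paper, under its non-resonance hypotheses. It then suffices to exhibit some $B\in\C^{N\times g}$ satisfying \eqref{eq:Hautus}. For each $j$, choose a basis of $\ker(\overline\gamma_j-A^*)$ of size $g_j\le g$. Then choose $B^*$ to be injective on each of these finitely many subspaces simultaneously. A generic $g\times N$ matrix does this, since failure is a proper algebraic condition for each $j$.

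I expect the main obstacle to be this sharpness step, because it leans on the sufficiency theorem. Verifying its spectral hypotheses for some example is easiest with a diagonal $A$ having, say, a single eigenvalue of multiplicity $g$ and $\rho<2$ generic.
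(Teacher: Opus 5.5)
Your necessity argument is the paper's own: project onto $\xi\in\ker(\overline\lambda I-A^*)\cap\ker B^*$, observe that the control drops out of the scalar equation, and excite a single spatial mode. Your explicit choice $u^1=\mu\varphi_1\xi$ gives $w_1(t)=|\xi|^2e^{\mu t}$. This is a small improvement: it shows that even $u(\cdot,T)$ alone cannot vanish, and it replaces the paper's appeal to backward uniqueness. Your weak-formulation remark also handles a trace issue the paper does not address. One slip to fix: injectivity of $B^*$ on the eigenspace gives $\ell\ge\rank B^*\ge g_j$, not $\ell=\rank B^*$.

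The genuine difference is in the sharpness step.

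\textbf{The paper's route.} It proves sharpness purely algebraically, for every $A$. It defines $B_g^*\psi_{j,k}:=e_k$ on the direct sum of the adjoint eigenspaces, extends linearly, and checks $E_j\cap\ker B_g^*=\{0\}$. So \eqref{eq:Hautus} holds with exactly $g$ columns. This is explicit and unconditional, but by itself it says nothing about null controllability.

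\textbf{Your route.} You obtain an admissible $B$ by genericity: the bad set is a finite union of proper Zariski-closed sets in $\C^{g\times N}$, proper because $g_j\le g$. You then invoke Theorem~\ref{thm:subcritical}. This is not circular, since that proof uses only \eqref{eq:Hautus}. It buys a genuinely control-theoretic sharpness, arguably closer to the literal wording ``at least $g$ controls are necessary \dots\ sharp''. However, it applies only to those $A$ satisfying \eqref{nr1}--\eqref{nr2}. Examples are $\sigma(A)\subset[0,\infty)$, by the Remark, or simply $A=0$, $B=I_g$. The paper's version covers arbitrary $A$, but only for the rank condition.
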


\begin{proof}
If \eqref{eq:Hautus} fails, there are an eigenvalue $\gamma$ and a nonzero
$\psi\in\ker(\overline\gamma I_N-A^*)\cap\ker B^*$. The scalar projection
$z=\langle u,\psi\rangle$ then satisfies
\[
 z_{tt}+L^2z+\rho Lz_t+\gamma z=0
\]
with homogeneous boundary conditions: the controlled trace vanishes because
$\langle Bf,\psi\rangle=\langle f,B^*\psi\rangle=0$. Choose nonzero initial
data supported on one spatial eigenfunction in the direction $\psi$. The
corresponding Fourier coefficient solves an autonomous second-order ODE. If
its position and velocity vanished at $T$, uniqueness backward from $T$ would
force the initial coefficient to vanish, a contradiction. Thus every
eigendirection must be detected by $B^*$. Since this map is injective from an
eigenspace of dimension $g_j$ into $\C^\ell$, one has $\ell\ge g_j$ for all
$j$. Conversely, let
$E_j:=\ker(\overline{\gamma_j}I_N-A^*)$
and choose a basis
$\{\psi_{j,k}\}_{k=1}^{g_j}$ of $E_j$. Let
$\{e_1,\ldots,e_g\}$ be the canonical basis of $\mathbb C^g$.
Since the eigenspaces $E_j$ are in direct sum, define on
$\bigoplus_jE_j$
\[
 B_g^*\psi_{j,k}:=e_k,
 \quad 1\leq k\leq g_j,
\]
and extend $B_g^*$ linearly to $\mathbb C^N$. Then
\[
 E_j\cap\ker B_g^*=\{0\}
 \quad\text{for every }j.
\]
Hence $B_g\in\mathbb C^{N\times g}$ satisfies~(2.4), and $g$ is
the smallest number of columns for which the
Fattorini--Hautus condition can hold.
\end{proof}

In the next section, will show that  the temporal roots are
\begin{equation}
\label{eq:roots}
 \lambda_{j,m}^{\pm}
 =\frac{-\rho m^2\pm
 \sqrt{\rho^2m^4-4(m^4+\gamma_j)}}{2}, \ m\geq 1.
\end{equation}
If $0<\rho<2$, then, uniformly in $j$,
\begin{equation}
 \lambda_{j,m}^{\pm}
 =\frac{-\rho\pm i\sqrt{4-\rho^2}}2m^2+O(m^{-2}).
\end{equation}
In particular, for every $c<\rho/2$ and all sufficiently large $m$,
\begin{equation}
\label{eq:root-decay}
 \Re\lambda_{j,m}^{\pm}\le-cm^2,
 \quad
 |\lambda_{j,m}^+-\lambda_{j,m}^-|\asymp m^2.
\end{equation}
At $\rho =2$, the temporal roots simplify to
 $\lambda_{j,m}^{\pm}=-m^2\pm\sqrt{-\gamma_j},$
which anticipates the parabolic reduction of Section~\ref{sec:critical}.

The subcritical construction requires two spectral exclusions.
First, roots attached to distinct pairs must not coincide:
\begingroup
\renewcommand{\theHequation}{NR1}
\begin{equation}
 \tag{\rm NR$_1$}\label{nr1}
\lambda_{j,m}^{\sigma}\neq\lambda_{k,n}^{\eta}
 \quad\text{if }(j,m)\neq(k,n).
\end{equation}
\endgroup
Here $1\le j,k\le p_0$, $m,n\ge1$, and
$\sigma,\eta\in\{+,-\}$. The indices $j,k$ label the distinct
eigenvalues of $A$; their multiplicities are encoded by the Jordan
data. Condition {\rm (NR$_1$)} allows
$\lambda_{j,m}^{+}=\lambda_{j,m}^{-}$.
Set
\[
 \begin{aligned}
 \mathcal D_\rho
 &:=\{(j,m):\lambda_{j,m}^{+}=\lambda_{j,m}^{-}\}=\left\{(j,m):
 \gamma_j=-\frac{4-\rho^2}{4}m^4\right\}.
 \end{aligned}
\]
Since $0<\rho<2$, $\mathcal D_\rho$ is finite and
$\#\mathcal D_\rho\le p_0$. For $(j,m)\in\mathcal D_\rho$, put
 $\lambda_{j,m}^{\mathrm c}
 :=\lambda_{j,m}^{+}
 =\lambda_{j,m}^{-}
 =-\frac{\rho m^2}{2}.$
Define
\[
 \Sigma_{j,m}:=
 \begin{cases}
 \{+,-\},&(j,m)\notin\mathcal D_\rho,\\
 \{\mathrm c\},&(j,m)\in\mathcal D_\rho,
 \end{cases}
 \quad
 d_{j,m}^{\sigma}:=
 \begin{cases}
 \nu_j,&\sigma=\pm,\\
 2\nu_j,&\sigma=\mathrm c.
 \end{cases}
\]
Thus, if $\lambda_{j,m}^{+}=\lambda_{k,n}^{-}$, then
{\rm (NR$_1$)} gives $(j,m)=(k,n)$, and the two roots are represented
by the single label $\mathrm c$.

Second, we assume
\begingroup
\renewcommand{\theHequation}{NR2}
\begin{equation}
 \tag{\rm NR$_2$}\label{nr2}
 (\lambda_{j,m}^{\sigma})^2+\gamma_j\neq0,
 \quad
 \substack{1\le j\le p_0,\ m\ge1,\\
           \sigma\in\Sigma_{j,m}.}
\end{equation}
\endgroup
Set $\mathcal R_\rho:=\{-m^4/\rho^2:m\ge1\}$. 
This condition will guarantee that the following equation is non-trivial, for $f\in H_0^2(0,T;\C^\ell)$,
\[
 \begin{aligned}
 &\int_0^T(f''+\gamma_jf)e^{\lambda(T-t)}dt=
 (\lambda^2+\gamma_j)
 \int_0^Tf e^{\lambda(T-t)}dt,
 \end{aligned}
\]
where $\lambda=\lambda_{j,m}^{\sigma}$.
The characteristic
equation gives
\[
 \eqref{nr2}
 \quad\Longleftrightarrow\quad
 \sigma(A)\cap\mathcal R_\rho=\varnothing.
\]
Put $\tau=T-t$ and choose $\alpha\in\C$ such that
\[
 (\lambda_{j,m}^{\sigma})^2+\alpha\neq0,
 \quad
 \substack{1\le j\le p_0,\ m\ge1,\\
\sigma\in\Sigma_{j,m}.}
\]
Let $\omega^2=-\alpha$. For
$\sigma\in\Sigma_{j,m}$ and $0\le q<d_{j,m}^{\sigma}$, define
\begin{equation}
 E_{j,q,m}^{\sigma}(t)
 :=\tau^q e^{\lambda_{j,m}^{\sigma}\tau}.
 \label{fam}
\end{equation}
Hence a double root gives one confluent family of order $2\nu_j$.
Finally, define
\[
 (\chi_\alpha^0,\chi_\alpha^1):=
 \begin{cases}
 (e^{\omega\tau},e^{-\omega\tau}),&\omega\neq0,\\
 (1,\tau),&\omega=0.
 \end{cases}
\]
These auxiliary modes encode the terminal traces of
$f''+\alpha f=G$.

\begin{lemma}
\label{lem:biorthogonality}
Let $T>0$ and $0<\rho<2$. Under \eqref{nr1}, and with $\alpha$
fixed as above, there exist functions
$g_{j,q,m}^{\sigma}\in L^2(0,T)$ satisfying
\begin{align}
 \int_0^T E_{p,r,n}^{\eta}g_{j,q,m}^{\sigma}\,dt
 &=\delta_{jp}\delta_{qr}\delta_{mn}\delta_{\sigma\eta},
 \label{eq:block2}\\
 \int_0^T\chi_\alpha^d g_{j,q,m}^{\sigma}\,dt
 &=0,\qquad d=0,1,
 \label{eq:block3}\\
 \|g_{j,q,m}^{\sigma}\|_{L^2(0,T)}
 &\le C_T(1+m)^M e^{C_Tm}
 \label{eq:block4}
\end{align}
for all admissible indices, where $M\ge0$ and $C_T>0$ are
independent of $j,q,m,\sigma$. Thus the functions
$g_{j,q,m}^{\sigma}$ are biorthogonal to the family in
\eqref{fam} and orthogonal to $\chi_\alpha^0,\chi_\alpha^1$.
The temporal pairings are bilinear, without complex conjugation.
\end{lemma}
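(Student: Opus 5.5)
We construct the family $g_{j,q,m}^{\sigma}$ by the block moment method of \cite{BoyerMorancey2023}, after checking that the exponents $\Lambda:=\{\lambda_{j,m}^{\sigma}\}\cup\{\pm\omega\}$ (or $\{0\}$ with multiplicity two when $\omega=0$) satisfy its hypotheses. The change of variables $\tau=T-t$ preserves $L^2(0,T)$ norms. So it suffices to produce, in the variable $\tau$, a family biorthogonal to $\tau^q e^{\mu\tau}$ for every $\mu\in\Lambda$ and every $q$ below the prescribed multiplicity. The multiplicity is $d^\sigma_{j,m}$ for the beam roots, and $1$ each for $\pm\omega$ (or $2$ at $0$). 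We then keep only the elements dual to the beam modes. Biorthogonality of these elements to the $\chi_\alpha^d$ is exactly \eqref{eq:block3}.

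Before applying the block method we must make $\Lambda$ a set of pairwise distinct points. Distinct beam labels already give distinct roots by \eqref{nr1}, and a double root is counted once with doubled multiplicity $2\nu_j$. The points $\pm\omega$ differ from every $\lambda^\sigma_{j,m}$ because $(\lambda^\sigma_{j,m})^2+\alpha\ne0$ means $\lambda^\sigma_{j,m}\neq\pm\omega$. Multiplicities are bounded by $\max(2\max_j\nu_j,2)$.

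Next we verify the structural hypotheses on $\Lambda$. Write $\mu=-\lambda$. By \eqref{eq:roots} and \eqref{eq:root-decay}, outside a finite set the points satisfy $\Re\mu\ge c|\mu|$, so they lie in a sector $|\arg\mu|\le\theta<\pi/2$. The finitely many exceptional points (among them $\pm\omega$ and small $m$) can be absorbed by a translation $\mu\mapsto\mu+\kappa$. This translation multiplies the moments by $e^{\kappa\tau}$, which changes norms only by a factor $e^{|\kappa|T}$.

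We also need a counting bound. The points of modulus at most $r$ are those with $m\lesssim\sqrt r$, a finite number of them per $m$, so the counting function is $N(r)\le C\sqrt r$. This gives the summability condition $\sum 1/|\mu|<\infty$, in the quantitative form required by \cite{BoyerMorancey2023}.

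Finally we need a grouping condition. The points split into groups $G_m=\{\lambda_{j,m}^{\pm}: j\}$, two clusters near $\frac{-\rho\pm i\sqrt{4-\rho^2}}{2}m^2$, each of diameter $O(m^{-2})$ uniformly in $j$. Different $m$ are separated by a gap $\gtrsim m$, since the leading terms differ by a multiple of $m^2-n^2$. The two clusters for the same $m$ are separated by $\asymp m^2$. So the block hypothesis holds with bounded group size $p_0$ and bounded total multiplicity.

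The block method then yields, for each group, functions whose $L^2$ norms are controlled by $e^{C_T|\mu|^{1/2}}$, up to a factor depending on the inverse of the divided differences inside the block. Since $|\mu|^{1/2}\asymp m$, this is the $e^{C_Tm}$ of \eqref{eq:block4}.

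The main obstacle is the intra-block condensation. Inside a cluster, $|\lambda^\sigma_{j,m}-\lambda^\sigma_{k,m}|$ may be as small as about $m^{-2}$. If the true points are close but not merged, the biorthogonal elements attached to an individual exponential are not bounded by the generalized divided-difference norm alone. The cost of separating them blows up like a power of the inverse gap.

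To handle this, we would use \eqref{nr1} together with the asymptotics. Expanding \eqref{eq:roots} gives
\[
\lambda^\sigma_{j,m}-\lambda^\sigma_{k,m}=\frac{\mp(\gamma_j-\gamma_k)}{i\sqrt{4-\rho^2}\,m^2}+O(m^{-6}),
\]
so within a block the gaps are bounded below by $c\,m^{-2}$ for large $m$, and for small $m$ they are simply nonzero by \eqref{nr1}. With bounded block size, the inverse of a confluent Vandermonde matrix on such points is bounded by $C(1+m)^{M}$, where $M$ depends only on $p_0$ and the $\nu_j$. This is the source of the polynomial factor $(1+m)^M$ in \eqref{eq:block4}.

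The resulting constants are uniform in $j,q,\sigma$, because every index set involved is finite. All pairings are bilinear, since the moment problem is posed on the exponentials themselves rather than their conjugates.
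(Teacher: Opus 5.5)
Your proposal is correct and follows essentially the same route as the paper: translate the exponents $\{-\lambda^\sigma_{j,m}\}\cup\{\mp\omega\}$ into a sector, and check the sector, weak-gap and counting (exponent $1/2$) hypotheses of the Boyer--Morancey block theorem. You then apply it with multiplicities $d^\sigma_{j,m}$ (and $1,1$, or $2$ at the auxiliary node), use the $\asymp m^{-2}$ intra-cluster gaps from the asymptotic expansion to get the polynomial divided-difference factor $C(1+m)^M$, and undo the shift and time reversal. The only point to tidy is that the groups should be taken per branch, $G_{m,\sigma}=\{-\lambda^\sigma_{k,m}:1\le k\le p_0\}$, as in the paper and as your size-$p_0$ count already assumes, rather than merging both clusters into one $G_m$ of diameter $\asymp m^2$.
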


\begin{proof}
Since $\mathcal D_\rho$ is finite, all sufficiently large modes have
two distinct roots. Choosing the labels $+$ and $-$ consistently
with the two high-frequency branches and setting
$d_\rho=(4-\rho^2)^{1/2}$, we have, uniformly in $j$,
\begin{equation}
 -\lambda_{j,m}^{\pm}
 =
 \frac{\rho\mp i d_\rho}{2}m^2
 \mp\frac{i\gamma_j}{d_\rho}m^{-2}
 +O(m^{-6}).
 \label{asy}
\end{equation}
Since the $\gamma_j$ are pairwise distinct, \eqref{asy} yields, for
all sufficiently large indices,
\begin{align}
 |\lambda_{j,m}^{\sigma}-\lambda_{k,m}^{\sigma}|
 &\asymp m^{-2},
 &&j\neq k,                                      \label{gap}\\
 |\lambda_{j,m}^{\sigma}-\lambda_{k,n}^{\sigma}|
 &\ge c|m^2-n^2|,
 &&m\neq n,                                      \notag\\
 |\lambda_{j,m}^{+}-\lambda_{k,n}^{-}|
 &\ge c(m^2+n^2).                                \notag
\end{align}
Thus high-frequency condensation occurs only within groups having fixed $(m,\sigma)$.
Choose $a_0>0$ sufficiently large, set
 $\mu_{j,m}^{\sigma}:=a_0-\lambda_{j,m}^{\sigma}\hbox{, }\sigma\in\Sigma_{j,m},$
and consider
\[
 \Lambda_*
 :=
 \{\mu_{j,m}^{\sigma}:
 1\le j\le p_0,\ m\ge1,\
 \sigma\in\Sigma_{j,m}\}\cup A_*,
 \]
 where \(A_*=\{a_0\pm\omega\}.\)
By \eqref{nr1}, the convention that a double root is counted only
once, and the choice of $\alpha$, the underlying nodes of
$\Lambda_*$ are pairwise distinct. Moreover, since
 $\left|\frac{\rho\mp i d_\rho}{2}\right|=1,$
\eqref{asy} gives, uniformly in $j$ and $\sigma$,
\[
 \Re\mu_{j,m}^{\sigma}\asymp m^2,
 \qquad
 |\mu_{j,m}^{\sigma}|
 =
 m^2+\frac{\rho a_0}{2}+O(m^{-2}).
\]
Together with \eqref{gap}, these estimates verify the sector,
local weak-gap, and counting conditions in
\cite[(22), (23), (28)--(30)]{BoyerMorancey2023}. Hence, for
suitable $p\in\mathbb N^*$ and $\varrho,\tau_0,\kappa>0$,
\[
 \Lambda_*
 \in\mathcal L_w(p,\varrho,\tau_0,1/2,\kappa).
\]
The grouping of \cite[Proposition~6]{BoyerMorancey2023} can be
chosen, outside finitely many groups, as
\[
 G_{m,\sigma}
 =
 \{\mu_{k,m}^{\sigma}:1\le k\le p_0\},
 \quad \sigma\in\{+,-\}.
\]
Assign multiplicity $d_{j,m}^{\sigma}$ to
$\mu_{j,m}^{\sigma}$, multiplicity $1$ to each node
$a_0\mp\omega$ if $\omega\neq0$, and multiplicity $2$ to $a_0$
if $\omega=0$. Set
 $d_*:=\max\{2,2\nu_1,\ldots,2\nu_{p_0}\}.$
Fix $(j,q,m,\sigma)$ with $\sigma\in\Sigma_{j,m}$ and
$0\le q<d_{j,m}^{\sigma}$. Apply
\cite[Theorem~46]{BoyerMorancey2023} to the group containing
$\mu_{j,m}^{\sigma}$, taking the parameter of that theorem equal
to $d_*$ and using the assigned multiplicities as multi-index.
Prescribe the datum $1$ at $\mu_{j,m}^{\sigma}$ and order $q$,
and zero at every other node and order.

The interpolation and vanishing assertions {\rm (115a)--(115b)}
give $h_{j,q,m}^{\sigma}\in L^2(0,T)$ such that
\begin{equation}
 \int_0^T
 \frac{(-s)^r}{r!}
 e^{-\mu_{k,n}^{\vartheta}s}
 h_{j,q,m}^{\sigma}(s)\,ds
 =
 \delta_{jk}\delta_{qr}\delta_{mn}
 \delta_{\sigma\vartheta}
 \label{mom}
\end{equation}
for every admissible $(k,r,n,\vartheta)$, together with zero
moments of order $0$ at $a_0\mp\omega$ if $\omega\neq0$, and of
orders $0,1$ at $a_0$ if $\omega=0$.

Let
\[
 R:=\sum_{k=1}^{p_0}\nu_k,
 \quad
 M:=2(R-1).
\]
For a non-exceptional group, the multiplicity multi-index is
$(\nu_1,\ldots,\nu_{p_0})$ and its total multiplicity is $R$.
Standard confluent divided-difference estimates, together with \eqref{gap}, bound the divided-difference factor in
\cite[(116)]{BoyerMorancey2023} by
 $C(1+m)^M.$ Moreover, if $r_G:=\inf_{\mu\in G}\Re\mu$, then
 $r_{G_{m,\sigma}}\asymp m^2.$ Since the counting exponent is $1/2$, estimate {\rm (116)} yields
\begin{equation}
 \|h_{j,q,m}^{\sigma}\|_{L^2(0,T)}
 \le C_T(1+m)^M e^{C_Tm}.
 \label{hn}
\end{equation}
The finitely many groups containing a double, auxiliary, or
low-frequency node are absorbed into $C_T$.
Finally, define
 $g_{j,q,m}^{\sigma}(t)
 :=
 \frac{(-1)^q}{q!}
 e^{-a_0(T-t)}
 h_{j,q,m}^{\sigma}(T-t).$
With $s=T-t$, \eqref{mom} gives
\begin{align*}
 \int_0^T
 E_{k,r,n}^{\vartheta}(t)
 g_{j,q,m}^{\sigma}(t)\,dt
 &=
 \frac{(-1)^{q+r}r!}{q!}
 \delta_{jk}\delta_{qr}\delta_{mn}
 \delta_{\sigma\vartheta}=
 \delta_{jk}\delta_{qr}\delta_{mn}
 \delta_{\sigma\vartheta},
\end{align*}
which proves \eqref{eq:block2}. The auxiliary zero moments give \eqref{eq:block3}. Finally, time reversal is an isometry and
 $\frac{|e^{-a_0(T-t)}|}{q!}\le1.$
Hence \eqref{hn} implies \eqref{eq:block4}.
\end{proof}
\begin{theorem}
\label{thm:subcritical}
Let $T>0$ and $0<\rho<2$. Assume the Fattorini--Hautus condition
\eqref{eq:Hautus}, the global spectral non-resonance condition
\eqref{nr1}, and the transfer non-resonance
condition \eqref{nr2}. Then every
$(u^0,u^1)\in\X^3\times\X^1$
can be driven to rest by a control
 $f\in H_0^2(0,T;\C^\ell).$
Moreover,
\begin{equation}
\label{eq:control}
 \|f\|_{H^2(0,T;\C^\ell)}
 \le
 C_T
 \left(
 \|u^0\|_{\X^3}
 +
 \|u^1\|_{\X^1}
 \right).
\end{equation}
\end{theorem}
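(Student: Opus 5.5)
We will follow the Fourier--Jordan moment approach set up earlier. First lift the boundary datum: write $u=\frac{x}{\pi}Bf+v$. Then $v$ satisfies the equation with homogeneous hinged conditions, initial data $(u^0,u^1)$ (because $f(0)=f'(0)=0$), and forcing $-\frac{x}{\pi}B(f''+Af)$. Here $L^2(x/\pi)=0$ and $L(x/\pi)$ vanishes in the interior, so the higher-order terms contribute only through the boundary trace. Since $f(T)=f'(T)=0$, null controllability of $(u,u_t)$ at $T$ is equivalent to $v(T)=v_t(T)=0$.

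Next we project onto $\varphi_m$ and onto the generalized eigenvectors of $A^*$. Fix a Jordan basis of $A^*$ with chains $\psi_{j,k}^{(0)},\dots,\psi_{j,k}^{(r_{j,k}-1)}$ attached to $\overline{\gamma_j}$. Multiply the $m$-th Fourier coefficient by the chain vectors and apply Duhamel's formula with the fundamental solution of $y''+\rho m^2y'+(m^4+\gamma_j)y=0$. For each root $\lambda=\lambda^\sigma_{j,m}$ we obtain the condition
\[
\int_0^T e^{\lambda (T-t)}\langle (f''+\gamma_j f),B^*\psi\rangle\,dt=\text{(explicit combination of }u^0_m,u^1_m),
\]
together with its derivatives in $\lambda$ up to order $r_{j,k}-1$ along the chain. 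These derivatives produce the moments against $\tau^qe^{\lambda\tau}$. At a double root in $\mathcal D_\rho$ the orders add up, which gives the confluent family of length $2\nu_j$.

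We then integrate by parts. Because of the traces of $f\in H^2_0$, $\int (f''+\gamma_j f)\tau^q e^{\lambda\tau}$ equals $(\lambda^2+\gamma_j)\int f\tau^q e^{\lambda\tau}$ plus lower-order terms in $q$. This is a triangular system whose diagonal entries are $(\lambda^2+\gamma_j)\neq0$ by \eqref{nr2}. Hence the moment problem for $f$ becomes a problem with right-hand sides $c^\sigma_{j,q,m}\in\C^{g_j}$ (one for each chain $k$), which we must realise in the form $B^*\psi$-pairings. By \eqref{eq:Hautus}, $B^*$ is injective on each eigenspace of $A^*$. We choose $\ell$-vectors $b_{j,k}$ so that the vector moments can be solved, absorbing higher chain levels into the lower ones through a triangular transformation.

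Rather than solving for $f$ directly, we set $G=f''+\alpha f$ and look for
\[
G(t)=\sum_{j,q,m,\sigma} w^\sigma_{j,q,m}\,g^\sigma_{j,q,m}(t),\qquad w\in\C^\ell,
\]
using Lemma~\ref{lem:biorthogonality}. Orthogonality to $\chi^0_\alpha,\chi^1_\alpha$ (\eqref{eq:block3}) is exactly the condition that guarantees $f''+\alpha f=G$, $f(0)=f'(0)=0$, has $f(T)=f'(T)=0$. Then $\|f\|_{H^2}\le C\|G\|_{L^2}$. The moments of $f''+\gamma_jf$ are recovered from those of $G$ by the same triangular relation, with the factor $(\lambda^2+\gamma_j)/(\lambda^2+\alpha)$, which is bounded and bounded away from zero for large $m$.

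The main obstacle, we expect, is convergence. The right-hand sides involve $e^{\lambda T}$-type factors from the free evolution. Their size is $|w_{j,q,m}|\lesssim m^{K}e^{-cTm^2}(m^3|u^0_m|+m|u^1_m|)$ for some $c<\rho/2$, by \eqref{eq:root-decay}. This estimate needs care: the Jordan derivatives and the nearly-confluent roots in \eqref{gap} (separation $m^{-2}$) produce polynomial factors $m^{K}$, and these must be controlled uniformly in $j$. Once it holds, \eqref{eq:block4} gives
\[
\|G\|_{L^2}\le \sum_m C_T m^{M+K}e^{C_Tm-cTm^2}\bigl(m^3|u^0_m|+m|u^1_m|\bigr),
\]
and the Gaussian term dominates. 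Cauchy--Schwarz then yields \eqref{eq:control}. Finally, we check that the constructed $f$ solves the original moment identities for every mode, hence $v(T)=v_t(T)=0$.
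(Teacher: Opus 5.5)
Your proposal is correct and follows essentially the paper's route: the lifting $u=hBf+v$, the Fourier--Jordan/Duhamel reduction to vector moments against $\tau^qe^{\lambda\tau}$ (confluent on $\mathcal D_\rho$), the (NR$_2$) triangular passage between moments of $f''+\gamma_jf$ and of $f$, a Hautus right inverse that lifts the chain equations to $\C^\ell$, and finally $G=f''+\alpha f$ expanded in the family of Lemma~\ref{lem:biorthogonality}, where orthogonality to $\chi_\alpha^0,\chi_\alpha^1$ gives $f(T)=f'(T)=0$ and the Gaussian decay of the targets beats $e^{C_Tm}$. Two clarifications: the step your sketch leaves implicit, which the paper makes explicit through $\widehat B_{j,s}$, is that at chain level $s$ the new top-degree moment enters only through the eigenvector coefficient $B^*\psi_{j,k}^{(0)}$ (from $K_{j,m}^{*s}$ acting on the level-one forcing), which is exactly why the Hautus condition suffices; and the $O(m^{-2})$ inter-eigenvalue gaps enter only the bound \eqref{eq:block4}, not the targets, whose denominators involve only the intra-pair gap $\lambda_{j,m}^+-\lambda_{j,m}^-\asymp m^2$.
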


\begin{remark}
If $\sigma(A)\subset[0,\infty)$, conditions
\eqref{nr1}--\eqref{nr2} are
automatic. Indeed, the temporal roots have real part $-\rho m^2/2$, so a collision
first forces the same spatial index; their imaginary parts then distinguish
the eigenvalue and the sign. No diagonalizability of $A$ is needed.
\end{remark}

\begin{theorem}
\label{thm:critical}
Let $T>0$, $\rho=2$, and $A=A^*\ge0$. If \eqref{eq:Hautus} holds, then every
$(u^0,u^1)\in\X^3\times\X^1$ can be driven to rest by some
$f\in H_0^2(0,T;\C^\ell)$ satisfying \eqref{eq:control}.
\end{theorem}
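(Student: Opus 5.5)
The proof reduces the critical case to a coupled heat system and solves the resulting moment problem in the diagonal frame of $A$.

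\textbf{Reduction.} At $\rho=2$ the operator factors as $\partial_{tt}+L^2+2L\partial_t=(\partial_t+L)^2$. First lift the boundary datum by $u=\frac{x}{\pi}Bf+v$. Then $v$ satisfies homogeneous hinged conditions and the equation
\[
(\partial_t+L)^2v+Av=-\frac{x}{\pi}B(f''+Af).
\]
The term $Lx=0$ makes the lifting compatible with the spatial operator. The traces $f^{(q)}(0)=f^{(q)}(T)=0$, $q=0,1$, ensure that $(v,v_t)$ and $(u,u_t)$ agree at $t=0$ and $t=T$.

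Since $A=A^*\ge0$, diagonalize $A=U\,\mathrm{diag}(\gamma_j)U^*$ with $\gamma_j\ge0$. Project onto an orthonormal eigenbasis $\psi$ of $\gamma_j$ and onto $\varphi_m$. Each scalar coefficient $z=\langle v_m,\psi\rangle$ then solves
\[
z''+2m^2z'+(m^4+\gamma_j)z=-c_m\langle B(f''+\gamma_jf),\psi\rangle,
\qquad c_m=\Bigl\langle \tfrac{x}{\pi},\varphi_m\Bigr\rangle\asymp \frac{(-1)^{m+1}}{m}.
\]
Its roots are $-m^2\pm i\sqrt{\gamma_j}$. These are simple when $\gamma_j>0$ and form a double root $-m^2$ when $\gamma_j=0$. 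The terminal conditions $z(T)=z'(T)=0$ become, via Duhamel, two moments of $\langle B^*\psi,f''+\gamma_jf\rangle$ against the modes $e^{\lambda\tau}$ for both roots $\lambda$. When the root is double, the modes are $e^{-m^2\tau}$ and $\tau e^{-m^2\tau}$.

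Integrate by parts using the zero traces of $f$:
\[
\int_0^T(f''+\gamma_jf)e^{\lambda\tau}\,dt=(\lambda^2+\gamma_j)\int_0^Tf\,e^{\lambda\tau}\,dt .
\]
For $\lambda=-m^2\pm i\sqrt{\gamma_j}$ this factor is $m^4\mp 2im^2\sqrt{\gamma_j}$, which never vanishes. The analogue of \eqref{nr2} is therefore automatic. Equivalently, one may set $G:=f''+\alpha f$ with $\alpha$ chosen as before, and require $G$ to be orthogonal to $\chi^0_\alpha,\chi^1_\alpha$ so that the four traces of $f$ vanish.

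\textbf{Non-collision and biorthogonal family.} The nodes $\lambda_{j,m}^\pm=-m^2\pm i\sqrt{\gamma_j}$ satisfy \eqref{nr1}. Their real parts distinguish $m$, and for fixed $m$ the values $\pm\sqrt{\gamma_j}$ are distinct across $j$. The only exception is $\gamma_j=0$, where the two roots merge into one double node, handled by the confluent labeling $\mathrm c$. Unlike the subcritical case, the gaps within a group $\{-m^2\pm i\sqrt{\gamma_k}\}_k$ are uniform in $m$, and the sequence $\mu=a_0+m^2\mp i\sqrt{\gamma_k}$ lies in the class $\mathcal L_w$ of \cite{BoyerMorancey2023}.

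I would therefore rerun the argument of Lemma~\ref{lem:biorthogonality} essentially verbatim. This gives $g$'s biorthogonal to the modes, orthogonal to $\chi_\alpha^0,\chi_\alpha^1$, and bounded by $C_Te^{C_Tm}$. Because every Jordan block of $A$ has size one, the multiplicities are at most $2$, and the divided-difference factor is bounded independently of $m$.

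\textbf{Control construction.} With the eigenvectors $\psi_{j,k}$, $1\le k\le g_j$, fixed, the moment problem for the $\ell$-component $G$ reads
\[
\int_0^T\langle B^*\psi_{j,k},G\rangle\,E\,dt=\text{data}.
\]
Here the data equal the free evolution of the initial coefficients, which carries the factor $e^{-m^2T}$, divided by $c_m(\lambda^2+\gamma_j)$. By \eqref{eq:Hautus}, the vectors $B^*\psi_{j,k}$, $k=1,\dots,g_j$, are linearly independent in $\C^\ell$. Choose vectors $b_{j,k}\in\C^\ell$ dual to them, meaning $\langle B^*\psi_{j,k'},b_{j,k}\rangle=\delta_{kk'}$, and set
\[
G=\sum b_{j,k}\,(\text{data})\,g .
\]
Convergence in $L^2$ follows from $e^{-m^2T}e^{C_Tm}m^{5}$ being summable, weighted against $\|u^0\|_{\X^3}+\|u^1\|_{\X^1}$. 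Then solve $f''+\alpha f=G$ with $f(0)=f'(0)=0$. Orthogonality to $\chi_\alpha^{0,1}$ gives $f(T)=f'(T)=0$, so $f\in H^2_0$, and the estimate \eqref{eq:control} holds.

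\textbf{Main obstacle.} The delicate point is the input linkage $(Bf,Bf')$ mentioned in the introduction: the reduced heat inputs are not independent. The step most likely to need care is checking that the linked moments for the two roots of a pair are solved simultaneously by a single $f$. The biorthogonal family treats $\lambda^\pm$ as separate nodes, which handles this. In the double-root case $\gamma_j=0$, the relevant factor is $\lambda^2=m^4$, and the confluent derivative moment acquires an extra term of size $2\lambda$. This makes the system for the two generalized moments triangular with nonzero diagonal, so it is invertible with polynomial bounds in $m$.
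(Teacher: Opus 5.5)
Your argument is correct, but it follows a different route from the paper.

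\textbf{Your route.} You stay in the beam formulation. Since $A=A^*$, every Jordan chain has length one, so the Fourier reduction of Section~\ref{sec:reduction} applies mode by mode with roots $-m^2\pm i\sqrt{\gamma_j}$, and with a confluent double root $-m^2$ when $\gamma_j=0$. The control enters only through $f''+\gamma_jf$, so the linkage $(Bf,Bf')$ never appears; the obstacle you flag is not actually present in your formulation. You then reuse the block construction of Lemma~\ref{lem:biorthogonality}. You correctly group $\{-m^2\pm i\sqrt{\gamma_k}\}_k$ together, since the two branches are now at bounded distance. The gaps are uniform, so there is no polynomial loss.

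\textbf{The paper's route.} The paper performs the exact reduction $Y=(u,u_t-u_{xx})$ to a $2N$-component heat system and obtains well-posedness from parabolic maximal regularity. It derives the moments by testing against the eigenvectors $z_{\nu,b}=(-\nu b,b)^\top$ of $\mathcal M^*$. For $\gamma=0$ it tests against the Jordan chain $z_{0,b},z_{1,b}$ of $\mathcal M^*$, which plays the role of your confluent root. It then solves the moment problem with the uniform-gap family of \cite{AKBGT11} in Lemma~\ref{lem:biorthogo}, taking $\alpha=0$.

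\textbf{What each buys.} Your route unifies the critical and subcritical cases and shows directly why the input linkage is harmless. The price is that you must supply the modal well-posedness and the Gaussian terminal bounds at $\rho=2$. Proposition~\ref{prop:wellposed} is stated only for $\rho<2$, and $|\lambda^+_{j,m}-\lambda^-_{j,m}|=2\sqrt{\gamma_j}$ no longer grows with $m$. This check is routine:
\begin{itemize}
\item $K_{j,m}(t)=e^{-m^2t}\sin(\sqrt{\gamma_j}\,t)/\sqrt{\gamma_j}$ (read as $te^{-m^2t}$ when $\gamma_j=0$) and $H_{j,m}(t)=e^{-m^2t}\bigl(\cos(\sqrt{\gamma_j}\,t)+m^2\sin(\sqrt{\gamma_j}\,t)/\sqrt{\gamma_j}\bigr)$ satisfy \eqref{eq:kernel-bounds} for every $c<1$.
\item For $\gamma_j>0$, solving the $2\times2$ terminal system gives the moment data $(b'(T)-\lambda^{\mp}b(T))/c_m$, with no factor $(\lambda^+-\lambda^-)^{-1}$.
\end{itemize}

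\textbf{Two small corrections.} The lifted forcing is $-\frac{x}{\pi}(Bf''+ABf)$, not $-\frac{x}{\pi}B(f''+Af)$. When you pass to $G=f''+\alpha f$, the $f$-moment targets must be multiplied by $\lambda^2+\alpha$; this is harmless, since the factor is polynomial in $m$.
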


\section{Fourier--Jordan reduction}
\label{sec:reduction}

Choose adjoint Jordan chains
\begin{equation}
\label{eq:adjoint-chain}
 A^*\psi_{j,k,s}
 =\overline{\gamma_j}\psi_{j,k,s}+\psi_{j,k,s-1},
 \quad \psi_{j,k,0}=0,
\end{equation}
and let $\{\theta_{j,k,s}\}$ be the biorthogonal basis, normalized by
\[
\langle\theta_{j,k,s},\psi_{j',k',s'}\rangle_{\C^N}
 =\delta_{jj'}\delta_{kk'}\delta_{ss'}.
\]
We use the Hermitian product linear in the first argument. With this
convention,
$A\theta_{j,k,s}=\gamma_j\theta_{j,k,s}+\theta_{j,k,s+1}\hbox{, }
\theta_{j,k,r_{j,k}+1}=0.$
Setting $\beta_{j,k,s}=B^*\psi_{j,k,s}$, condition \eqref{eq:Hautus} gives
\begin{equation}
\label{eq:Bhat-rank}
\rank
\begin{bmatrix}
\beta_{j,1,1} & \cdots & \beta_{j,g_j,1}
\end{bmatrix}
= g_j.
\end{equation}
Indeed, the vectors $\{\psi_{j,k,1}\}_{k=1}^{g_j}$ form a basis of $\ker(\overline{\gamma_j}I_N-A^*)$, and \eqref{eq:Hautus} implies that $B^*$ is injective on this eigenspace. Hence the columns of the matrix in \eqref{eq:Bhat-rank} are linearly
independent. In particular, $g_j\le\ell$ for every $j$.
Let $f\in H^2(0,T;\C^\ell)$ satisfy $f(0)=f'(0)=0$.
Set $h(x)=x/\pi$ and write
 $u=hBf+v.$
Then $v$ satisfies homogeneous hinged boundary conditions and
solves
\begin{equation}
\label{eq:lifted-system}
 \begin{aligned}
 v_{tt}+L^2v+\rho Lv_t+Av&=-h(Bf''+ABf),\\
 (v(0),v_t(0))&=(u^0,u^1).
 \end{aligned}
\end{equation}
Here the lifting is understood at the level of the differential expression:
$h''=h^{(4)}=0$ in $(0,\pi)$. We do not use the operator identity $Lh=0$,
because $h$ does not belong to the domain of the homogeneous Dirichlet
realization of $L$.
Split $v=v^a+v^b$ into the control-driven part with zero initial data and the
free part. Let $\varphi_m=\sqrt{2/\pi}\sin(mx)$ and
$\Phi_{j,k,s,m}=\varphi_m\theta_{j,k,s}$,
$\Psi_{j,k,s,m}=\varphi_m\psi_{j,k,s}$. These two families are biorthogonal
in $(L^2(0,\pi))^N$ and, since the Jordan bases are finite dimensional, their
coefficient norms are equivalent to the standard Fourier norms. Thus
\begin{equation}
 v^a=\sum_{j,k,s}\sum_{m\ge1}a_{j,k,s,m}\Phi_{j,k,s,m},
 \quad
 v^b=\sum_{j,k,s}\sum_{m\ge1}b_{j,k,s,m}\Phi_{j,k,s,m}.
\end{equation}
The sums over $k,s$ mean $1\le k\le g_j$ and
$1\le s\le r_{j,k}$. The coefficients are obtained by pairing with
$\Psi_{j,k,s,m}$. Since
\begin{equation}
\label{eq:hm}
 h=\sum_{m\ge1}x_m\varphi_m,
 \quad x_m=(-1)^{m+1}\sqrt{2/\pi}\,m^{-1},
\end{equation}
the coefficients satisfy, with $F_j=f''+\gamma_jf$,
\begin{equation}
\label{eq:modal-system}
\left\{
\begin{aligned}
 \mathcal P_{j,m}a_{j,k,s,m}
 &=-a_{j,k,s-1,m}
 -x_m(\beta_{j,k,s}^*F_j+\beta_{j,k,s-1}^*f),\\
 \mathcal P_{j,m}b_{j,k,s,m}&=-b_{j,k,s-1,m},
\end{aligned}
\right.
\end{equation}
where
$\mathcal P_{j,m}=\partial_t^2+\rho m^2\partial_t+m^4+\gamma_j$ and all
level-zero coefficients vanish. The $a$-coefficients have zero initial data,
whereas
\[
 b_{j,k,s,m}(0)=u^0_{j,k,s,m},
 \quad b'_{j,k,s,m}(0)=u^1_{j,k,s,m}.
\]
Here $u^r_{j,k,s,m}=\langle u^r,\Psi_{j,k,s,m}\rangle$, $r=0,1$. The term
$-a_{j,k,s-1,m}$ in \eqref{eq:modal-system} is precisely the nilpotent part of
the Jordan block; it is the origin of the triangular recursion in $s$.

For $(j,m)\notin\mathcal D_\rho$, set
$\A_{j,m}
:=
\bigl(\lambda_{j,m}^+-\lambda_{j,m}^-\bigr)^{-1}.$
For every pair $(j,m)$, define
\begin{align}
K_{j,m}(t)
&:=
\begin{cases}
\displaystyle
\A_{j,m}
\left(
e^{\lambda_{j,m}^+t}
-
e^{\lambda_{j,m}^-t}
\right),
&
(j,m)\notin\mathcal D_\rho,
\\[3mm]
\displaystyle
t e^{\lambda_{j,m}^{\mathrm c}t},
&
(j,m)\in\mathcal D_\rho,
\end{cases}
\label{eq:K}
\\
H_{j,m}(t)
&:=
\begin{cases}
\displaystyle
\A_{j,m}
\left(
\lambda_{j,m}^+
e^{\lambda_{j,m}^-t}
-
\lambda_{j,m}^-
e^{\lambda_{j,m}^+t}
\right),
&
(j,m)\notin\mathcal D_\rho,
\\[3mm]
\displaystyle
\left(
1-\lambda_{j,m}^{\mathrm c}t
\right)
e^{\lambda_{j,m}^{\mathrm c}t},
&
(j,m)\in\mathcal D_\rho.
\end{cases}
\notag
\end{align}
Then
\begin{equation}
\label{eq:kernel-initial-data}
\begin{aligned}
K_{j,m}(0)&=0,
&
K'_{j,m}(0)&=1,
\\
H_{j,m}(0)&=1,
&
H'_{j,m}(0)&=0.
\end{aligned}
\end{equation}
For every $T>0$ and $0<c<\rho/2$, after changing the constant to include
the finitely many low and double-root modes, there exists
$C_{T,c}>0$ such that, for $0\le t\le T$,
\begin{equation}
\label{eq:kernel-bounds}
 \begin{aligned}
 |K_{j,m}(t)|&\le C_{T,c}m^{-2}e^{-cm^2t},
 &|K'_{j,m}(t)|&\le C_{T,c}e^{-cm^2t},\\
 |H_{j,m}(t)|&\le C_{T,c}e^{-cm^2t},
 &|H'_{j,m}(t)|&\le C_{T,c}m^2e^{-cm^2t}.
 \end{aligned}
\end{equation}
If $*$ denotes causal convolution and
$R_{j,k,s,m}=H_{j,m}u^0_{j,k,s,m}+K_{j,m}u^1_{j,k,s,m}$, Duhamel's formula
reduces \eqref{eq:modal-system} to
\begin{align}
 b_{j,k,s,m}&=R_{j,k,s,m}-K_{j,m}*b_{j,k,s-1,m},
 \label{eq:b-recursion}\\
 a_{j,k,s,m}&=-K_{j,m}*\bigl[a_{j,k,s-1,m}\\[-1mm]
 &\hspace{13mm}
 +x_m(\beta_{j,k,s}^*F_j+\beta_{j,k,s-1}^*f)\bigr].
 \label{eq:a-recursion}
\end{align}
For later use, set
\[
 Q_{j,k,r,m}:=\beta_{j,k,r}^*F_j+\beta_{j,k,r-1}^*f,
 \quad Q_{j,k,0,m}:=0.
\]
An induction on $s$ gives the exact identity
\begin{equation}
\label{eq:a-iterated}
 a_{j,k,s,m}=x_m\sum_{r=1}^{s}(-1)^r
 K_{j,m}^{*r}*Q_{j,k,s-r+1,m};
\end{equation}
here $K_{j,m}^{*r}$ denotes $r$ iterations of convolution by $K_{j,m}$. Also,
\begin{equation}
\label{eq:b-iterated}
 b_{j,k,s,m}
 =\sum_{r=0}^{s-1}(-1)^r
 K_{j,m}^{*r}*R_{j,k,s-r,m},
\end{equation}
where $K^{*0}*R=R$. Iteration is finite, and the identities
\eqref{eq:a-iterated}--\eqref{eq:b-iterated}
are valid in both the simple- and double-root cases.

\begin{lemma}
\label{Kconv}
Let $s\ge1$, and let $K_{j,m}^{*s}$ denote the $s$-fold causal
convolution of $K_{j,m}$ with itself.
If $(j,m)\notin\mathcal D_\rho$, then
\begin{equation}
\label{Kfor}
K_{j,m}^{*s}(t)
=
\sum_{r=1}^{s}
d_{r,s}\A_{j,m}^{\,2s-r}
\frac{t^{r-1}}{(r-1)!}
\left[
(-1)^{s-r}e^{\lambda_{j,m}^{+}t}
+
(-1)^s e^{\lambda_{j,m}^{-}t}
\right],
\end{equation}
where $d_{r,s}:=\binom{2s-r-1}{s-r}.$
If $(j,m)\in\mathcal D_\rho$, then
\begin{equation}
\label{eq:double-convolution}
K_{j,m}^{*s}(t)
=
\frac{t^{2s-1}}{(2s-1)!}
e^{\lambda_{j,m}^{\mathrm c}t}.
\end{equation}
\end{lemma}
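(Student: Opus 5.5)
We prove both formulas by passing to the Laplace transform, where causal convolution becomes multiplication. This reduces each identity to a partial-fraction expansion of a power of a rational function.

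\textbf{Simple roots.} Let $(j,m)\notin\mathcal D_\rho$ and abbreviate $\lambda_\pm=\lambda_{j,m}^\pm$, $\A=\A_{j,m}=(\lambda_+-\lambda_-)^{-1}$. The kernel $K_{j,m}$ has Laplace transform
\[
\widehat K(z)=\A\Big(\frac1{z-\lambda_+}-\frac1{z-\lambda_-}\Big)=\frac1{(z-\lambda_+)(z-\lambda_-)},
\]
and therefore $\widehat{K^{*s}}(z)=(z-\lambda_+)^{-s}(z-\lambda_-)^{-s}$.

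We expand this function in partial fractions at $\lambda_+$. Write $(z-\lambda_-)^{-s}=(w+\A^{-1})^{-s}$ with $w=z-\lambda_+$, and use the binomial series
\[
(w+\A^{-1})^{-s}=\sum_{i\ge0}\binom{-s}{i}\A^{s+i}w^i,\qquad \binom{-s}{i}=(-1)^i\binom{s+i-1}{i}.
\]
The coefficient of $(z-\lambda_+)^{-r}$, for $1\le r\le s$, comes from the index $i=s-r$. It equals
\[
(-1)^{s-r}\binom{2s-r-1}{s-r}\A^{2s-r}=(-1)^{s-r}d_{r,s}\A^{2s-r}.
\]
At $\lambda_-$ the same computation applies with $\lambda_+$ and $\lambda_-$ interchanged, which replaces $\A$ by $-\A$. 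The coefficient of $(z-\lambda_-)^{-r}$ is then
\[
(-1)^{s-r}d_{r,s}(-\A)^{2s-r}=(-1)^{s-r}(-1)^{r}d_{r,s}\A^{2s-r}=(-1)^s d_{r,s}\A^{2s-r}.
\]

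Since the function is proper rational with poles only at $\lambda_\pm$, it equals the sum of its two principal parts; no polynomial part can appear. We now invert term by term using
\[
\mathcal L^{-1}\big[(z-\lambda)^{-r}\big]=\frac{t^{r-1}}{(r-1)!}e^{\lambda t}.
\]
This gives \eqref{Kfor} exactly. The only delicate point is the sign bookkeeping under $\A\mapsto-\A$, and the computation above settles it. As a sanity check, for $s=1$ we get $d_{1,1}=1$ and recover $K_{j,m}$ itself.

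\textbf{Double root.} Let $(j,m)\in\mathcal D_\rho$ and $\lambda=\lambda_{j,m}^{\mathrm c}$. Then $\widehat K(z)=(z-\lambda)^{-2}$, so $\widehat{K^{*s}}(z)=(z-\lambda)^{-2s}$. Inverting gives \eqref{eq:double-convolution}.

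Alternatively, one can avoid transforms by induction on $s$. The needed identity is
\[
\frac{t^{a}}{a!}e^{\lambda t}*\frac{t^{b}}{b!}e^{\mu t}=
\begin{cases}
\dfrac{t^{a+b+1}}{(a+b+1)!}e^{\lambda t}, & \mu=\lambda,\\[2mm]
\text{a combination of } \dfrac{t^{i}}{i!}e^{\lambda t}\text{ and } \dfrac{t^{i}}{i!}e^{\mu t}\text{ with powers of }\A, & \mu\neq\lambda.
\end{cases}
\]
The Laplace route is cleaner, however, and the transforms are legitimate because all functions involved are of exponential type.
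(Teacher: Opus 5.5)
Your proof is correct and follows the same route as the paper. You take the Laplace transform, use $\mathcal L(K_{j,m}^{*s})(z)=(z-\lambda_{j,m}^+)^{-s}(z-\lambda_{j,m}^-)^{-s}$, respectively $(z-\lambda_{j,m}^{\mathrm c})^{-2s}$, and then expand in partial fractions; you also carry out the binomial-series coefficient computation, including the sign change $\A\mapsto-\A$ at $\lambda_{j,m}^-$, which the paper leaves to the reader.
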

Proof: Assume first that $(j,m)\notin\mathcal D_\rho$. Applying the Laplace transform to $K_{j,m}^{*r}$, we get 
$\mathcal L(K_{j,m}^{*r})(z)
=
\frac{1}{
(z-\lambda_{j,m}^+)^r
(z-\lambda_{j,m}^-)^r
}.$
An exercise in partial fractions now yields the formula above in this case. 
Part B can be proven similarly, or by integration. The details are left to the reader. $\Box$

In the sequel, the two terms in \eqref{Kfor} corresponding to $r=s$, equivalently those containing $t^{s-1}$, will be called the \emph{principal terms}.
\begin{proposition}
\label{prop:wellposed}
Let $0<\rho < 2.$ 
For
$f\in H^2(0,T;\C^\ell)$ with $f(0)=f'(0)=0$ and
$(u^0,u^1)\in\X^3\times\X^1$, \eqref{eq:lifted-system} has a unique solution
such that
\begin{equation}
\label{eq:regularity}
 \begin{aligned}
 v\in{}&C([0,T];\X^3)\cap C^1([0,T];\X^1)\cap H^2(0,T;\X^{-1}),
 \end{aligned}
\end{equation}
\begin{equation}
\label{eq:wellposed}
 \begin{aligned}
 &\hbox{and }\|v\|_{C([0,T];\X^3)}+\|v_t\|_{C([0,T];\X^1)}+\|v_{tt}\|_{L^2(0,T;\X^{-1})}\\
 &\le C_T(\|u^0\|_{\X^3}+\|u^1\|_{\X^1}
 +\|f\|_{H^2(0,T;\C^\ell)}).
 \end{aligned}
\end{equation}
Moreover, for every $0<\eta<\rho$ and each fixed $j,k,s$,
\begin{equation}
\label{eq:Gaussian}
 \sum_{m\ge1}e^{\eta m^2T}
 \bigl(m^6|b_{j,k,s,m}(T)|^2+m^2|b'_{j,k,s,m}(T)|^2\bigr)
 \le C_{\eta,T}\mathcal E_0^2,
\end{equation}
where $\mathcal E_0=\|u^0\|_{\X^3}+\|u^1\|_{\X^1}$.
\end{proposition}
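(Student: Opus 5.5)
We construct the solution modewise from the Fourier--Jordan reduction and then sum. Since the bases $\{\Phi_{j,k,s,m}\}$ and $\{\Psi_{j,k,s,m}\}$ are biorthogonal and their coefficient norms are equivalent to the standard Fourier norms, it suffices to define $v=v^a+v^b$ through the coefficients \eqref{eq:a-iterated}--\eqref{eq:b-iterated} and to prove that the corresponding series converge in the norms appearing in \eqref{eq:wellposed}. Uniqueness holds modewise: each $(j,k,m)$ block is a finite triangular system of linear ODEs with constant coefficients. Any solution in the class \eqref{eq:regularity} has coefficients satisfying \eqref{eq:modal-system} in the distributional sense, so these coefficients must coincide with the Duhamel ones.

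\emph{Free part.} We estimate \eqref{eq:b-iterated} using \eqref{eq:kernel-bounds}. We first bound $|K^{*r}_{j,m}(t)|$ on $[0,T]$: convolving $r$ times a kernel bounded by $Cm^{-2}e^{-cm^2t}$ gives $C^r m^{-2r}\frac{t^{r-1}}{(r-1)!}e^{-cm^2t}$. This is consistent with Lemma~\ref{Kconv}, since $\A_{j,m}\asymp m^{-2}$ for large $m$. Because $r\le\nu_j$ is bounded, we get
\[
|b_{j,k,s,m}(t)|\le C e^{-cm^2t}\bigl(|u^0_{\cdot,m}|+m^{-2}|u^1_{\cdot,m}|\bigr),
\]
and analogously $|b'_{j,k,s,m}(t)|\le Ce^{-cm^2t}(m^2|u^0_{\cdot,m}|+|u^1_{\cdot,m}|)$. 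Here we use that derivatives of the convolutions fall on $H'$, $K'$, which lose at most a factor $m^2$. Multiplying by $m^3$ and by $m$ respectively and summing gives the $C([0,T];\X^3)\times C^1([0,T];\X^1)$ bound. Continuity in time follows by dominated convergence. The $\X^{-1}$ bound for $v_{tt}$ is read off from the equation: $v_{tt}=-L^2v-\rho Lv_t-Av-h(\cdots)$, where $L^2v\in\X^{-1}$ and $Lv_t\in\X^{-1}$. Evaluating at $t=T$ with any $c$ satisfying $\eta/2<c<\rho/2$, we get
\[
e^{\eta m^2T}\bigl(m^6|b(T)|^2+m^2|b'(T)|^2\bigr)\le Ce^{(\eta-2c)m^2T}\bigl(m^6|u^0_m|^2+m^2|u^1_m|^2\bigr),
\]
which yields \eqref{eq:Gaussian}.

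\emph{Control part.} Here the forcing is $x_mQ_{j,k,r,m}$ with $|x_m|\asymp m^{-1}$ and $Q\in L^2(0,T)$, $\|Q\|_{L^2}\le C\|f\|_{H^2}$. For the leading convolution $K_{j,m}*Q$, Young/Cauchy--Schwarz gives
\[
\|K_{j,m}\|_{L^2(0,T)}\le Cm^{-2}\bigl(\textstyle\int_0^T e^{-2cm^2t}dt\bigr)^{1/2}\le Cm^{-3}.
\]
Hence $|a_{j,k,s,m}(t)|\le Cm^{-4}\|Q\|$, so $m^3|a_m|\le Cm^{-1}\|f\|_{H^2}$. This is square-summable, giving a $C([0,T];\X^3)$ estimate. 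For the derivative we use $\|K'_{j,m}\|_{L^2}\le Cm^{-1}$, so $m|a'_m|\le Cm^{-1}\|f\|_{H^2}$, which is again square-summable. The higher iterates $K^{*r}$ are even smaller by factors $m^{-2}$. The bound for $v_{tt}$ in $L^2(0,T;\X^{-1})$ again follows from the equation together with $h\in X^{s}$ for $s<1/2$ and $f''\in L^2$.

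\emph{Main obstacle.} The main difficulty is bookkeeping uniformity in $m$ near the finitely many exceptional modes (double roots in $\mathcal D_\rho$ and low frequencies), where $\A_{j,m}$ is undefined or large. There we replace the constants by a maximum over finitely many $m$, as already permitted in \eqref{eq:kernel-bounds}, and use \eqref{eq:double-convolution} for the double-root iterates. The other point needing care is ensuring that the pointwise $L^2$-in-time convolution estimate is sharp enough to reach $\X^3$. This works because the gain $\|K_{j,m}\|_{L^2}\lesssim m^{-3}$ is exactly what is needed: combined with $x_m\sim m^{-1}$, it leaves an $\ell^2$ tail $m^{-1}$. If this turns out to be borderline for some term, we would instead use the energy identity for $\mathcal P_{j,m}$, multiplying by $\overline{a}'$. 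Damping then gives $m^4|a|^2+|a'|^2\lesssim m^{-2}\int|Q|^2/m^2$, which is the same scaling.
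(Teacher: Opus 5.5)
Your proposal is correct and follows essentially the same route as the paper. The free part uses the iterated Duhamel formula \eqref{eq:b-iterated} with the pointwise kernel bounds \eqref{eq:kernel-bounds} and their $r$-fold convolution analogues, with $c$ chosen in $(\eta/2,\rho/2)$ for \eqref{eq:Gaussian}; the controlled part uses Cauchy--Schwarz with $\|K_{j,m}\|_{L^2}\lesssim m^{-3}$, $\|K'_{j,m}\|_{L^2}\lesssim m^{-1}$ and $x_m=O(m^{-1})$, $v_{tt}$ is read off from the equation, and uniqueness is proved mode by mode. The energy-identity fallback is unnecessary, since the convolution estimate already leaves the square-summable tail $m^{-1}$.
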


\begin{proof}
The estimates for the first level $s=1$ follow directly from
$b_{j,k,1,m}=R_{j,k,1,m}$ and \eqref{eq:kernel-bounds}; more precisely,
\begin{equation}
 \label{eq:R-bounds}
 \begin{aligned}
 |R_{j,k,s,m}(t)|&\le C_{T,c}e^{-cm^2t}
 (|u^0_{j,k,s,m}|+m^{-2}|u^1_{j,k,s,m}|),\\
 |R'_{j,k,s,m}(t)|&\le C_{T,c}e^{-cm^2t}
 (m^2|u^0_{j,k,s,m}|+|u^1_{j,k,s,m}|).
 \end{aligned}
\end{equation}
Furthermore, for $r\ge1$,
\begin{equation}
 \label{eq:convol1}
 |K_{j,m}^{*r}(t)|\le
 \frac{C_{T,c}^r}{(r-1)!}\,m^{-2r}t^{r-1}e^{-cm^2t}.
\end{equation}
This follows directly by iterating
\eqref{eq:kernel-bounds}, and applies to both simple and double roots.
Moreover, for $r\ge1$,
\begin{equation}
 |(K_{j,m}^{*r})'(t)|
 \le \frac{C_{T,c}^r}{(r-1)!}\,
 m^{-2(r-1)}t^{r-1}e^{-cm^2t}.
\end{equation}
For $r\ge2$ this follows from
$(K_{j,m}^{*r})'=K'_{j,m}*K_{j,m}^{*(r-1)}$; the case $r=1$ is \eqref{eq:kernel-bounds}. Inserting these estimates into \eqref{eq:b-iterated} and using the bounded Jordan-chain length, we obtain, for $0\le t\le T$,
\[
 m^3|b_{j,k,s,m}(t)|+m|b'_{j,k,s,m}(t)|
 \le C_{T,c}e^{-cm^2t}
 \sum_{q=1}^{s}
 \left(
 m^3|u^0_{j,k,q,m}|+m|u^1_{j,k,q,m}|
 \right).
\]
Choose $c$ such that $\eta/2<c<\rho/2$. Evaluating at $t=T$,
squaring, multiplying by $e^{\eta m^2T}$, and summing over $m$ give \eqref{eq:Gaussian}, since $2c-\eta>0$.
For the controlled part, $v^a$, \eqref{eq:a-recursion} has zero initial data and a
forcing coefficient $x_m=O(m^{-1})$. From \eqref{eq:kernel-bounds}, with
constants allowed to depend on $T$,
\[
 \|K_{j,m}\|_{L^2(0,T)}\le Cm^{-3},
 \quad \|K'_{j,m}\|_{L^2(0,T)}\le Cm^{-1}.
\]
We also have
\[
 \|K_{j,m}\|_{L^1(0,T)}\le C_Tm^{-4},
 \quad \|K'_{j,m}\|_{L^1(0,T)}\le C_Tm^{-2}.
\]
Cauchy--Schwarz in the convolution therefore gives, at the first chain level,
$\sup_t|a_{j,k,1,m}(t)|\le C_Tm^{-4}\|F_j\|_{L^2}$ and
$\sup_t|a'_{j,k,1,m}(t)|\le C_Tm^{-2}\|F_j\|_{L^2}$. We now argue by
induction on the chain level. If the same estimates hold at level $s-1$, the
term $K_{j,m}*a_{j,k,s-1,m}$ gains an additional convolution, whereas the
direct forcing
$x_mK_{j,m}*(\beta_{j,k,s}^*F_j+\beta_{j,k,s-1}^*f)$ satisfies the same
bounds as at level one. Hence, for every admissible $s$,
\begin{equation}
\label{eq:controlled}
 \sup_{0\le t\le T}|a_{j,k,s,m}(t)|
 \le C_Tm^{-4}\|f\|_{H^2(0,T)}\hbox{,
} \sup_{0\le t\le T}|a'_{j,k,s,m}(t)|
 \le C_Tm^{-2}\|f\|_{H^2(0,T)}.
\end{equation}
The series weighted by $m^6$ and $m^2$ are therefore summable. Moreover, the
modal equation gives
\[
 a''_{j,k,s,m}=-\rho m^2a'_{j,k,s,m}-(m^4+\gamma_j)a_{j,k,s,m}
 -a_{j,k,s-1,m}
 -x_m(\beta_{j,k,s}^*F_j+\beta_{j,k,s-1}^*f).
\]
Using \eqref{eq:controlled} and summing with the
$\mathcal X^{-1}$ weight $m^{-2}$ yields
\[
 \sum_{m\ge1}m^{-2}\|a''_{j,k,s,m}\|_{L^2(0,T)}^2
 \le C_T\|f\|_{H^2(0,T)}^2.
\]
The preceding bounds and the modal equation give
\[
 \|b''_{j,k,s,m}\|_{L^2(0,T)}
 \le C_T\sum_{r=1}^{s}
 \left(m^3|u^0_{j,k,r,m}|+m|u^1_{j,k,r,m}|\right).
\]
Consequently,
$\sum_{m\ge1}m^{-2}\|b''_{j,k,s,m}\|_{L^2(0,T)}^2
 \le C_T\mathcal E_0^2.$
Parseval's identity, equivalence of the finite-dimensional Jordan norms, and dominated convergence then prove
\eqref{eq:regularity}--\eqref{eq:wellposed}. Uniqueness follows mode by mode from \eqref{eq:modal-system}.
\end{proof}

\section{Generalized moments}
\label{sec:moments}
We retain the notation
$\mathcal D_\rho$, $\lambda_{j,m}^{\mathrm c}$,
$\Sigma_{j,m}$, $d_{j,m}^{\sigma}$, $\nu_j$, $\tau$,
and $E_{j,q,m}^{\sigma}$ introduced in Section~2.
All temporal moment pairings are bilinear; no complex conjugation is
applied to $E_{j,q,m}^{\sigma}$.

Because $f(T)=f'(T)=0$, null controllability is equivalent to
\begin{equation}
\label{eq:terminal}
\begin{aligned}
a_{j,k,s,m}(T)&=-b_{j,k,s,m}(T),\\
a'_{j,k,s,m}(T)&=-b'_{j,k,s,m}(T).
\end{aligned}
\end{equation}
We now reduce the null-controllability problem to the following moment problem.
\begin{proposition}
\label{prop:moment}
Let $0<\rho< 2$. Assume \eqref{eq:Hautus} and
\eqref{nr1}--\eqref{nr2}. There exist vectors
\[
\zeta_{j,q,m}^{\sigma}\in\C^\ell,
\quad
\sigma\in\Sigma_{j,m},
\quad
0\le q<d_{j,m}^{\sigma},
\]
such that every $f\in H_0^2(0,T;\C^\ell)$ satisfying
\begin{equation}
\label{eq:moment-system}
\int_0^T(f''+\gamma_jf)E_{j,q,m}^{\sigma}\,dt
=\zeta_{j,q,m}^{\sigma}
\end{equation}
for every admissible $(j,q,m,\sigma)$ also satisfies
\eqref{eq:terminal}. Moreover, for some $P\ge0$ and $c>0$,
\begin{equation}
\label{eq:mom}
|\zeta_{j,q,m}^{\sigma}|
\le C(1+m)^P e^{-cm^2T}\mathcal E_0.
\end{equation}
\end{proposition}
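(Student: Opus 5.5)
The plan is to write the terminal values $a_{j,k,s,m}(T)$ and $a'_{j,k,s,m}(T)$ explicitly as finite linear combinations of the bilinear moments $\int_0^T F_j\,E^{\sigma}_{j,q,m}\,dt$, and then invert a triangular system mode by mode.

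\emph{Step 1: expand the terminal values.} Start from the exact identity \eqref{eq:a-iterated} evaluated at $t=T$, together with its time derivative, which uses $(K^{*r})'$. Insert the explicit formulas of Lemma~\ref{Kconv}. Since $K^{*r}(T-t)$ is a combination of $\tau^{p}e^{\lambda^{\pm}_{j,m}\tau}$ with $p\le r-1$, with coefficients that are powers of $\A_{j,m}$ (or of $\tau^{p}e^{\lambda^{\mathrm c}\tau}$ with $p\le 2r-1$ in the double-root case), $a(T)$ and $a'(T)$ become finite combinations of the moments $\int_0^T Q_{j,k,s-r+1,m}E^{\sigma}_{j,p,m}\,dt$.

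\emph{Step 2: eliminate the undifferentiated $f$.} The terms $\beta^*_{j,k,s-1}f$ inside $Q$ are rewritten in terms of $F_j$-moments. For $f\in H^2_0$, integrating by parts twice in $\int F_j\tau^q e^{\lambda\tau}$ gives
\[
(\lambda^2+\gamma_j)\int_0^T f\,\tau^q e^{\lambda\tau}\,dt
+\sum_{p<q}c_{p,q}(\lambda)\int_0^T f\,\tau^p e^{\lambda\tau}\,dt .
\]
The $c_{p,q}$ are polynomial in $\lambda$ and come from differentiating $\tau^q$. All boundary terms vanish because $f,f'$ vanish at $0$ and $T$. By \eqref{nr2} this relation is triangular in $q$ and invertible. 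Hence each $f$-moment is a combination of $F_j$-moments of order $\le q$. The coefficients are bounded polynomially in $m$, because $|\lambda^2+\gamma_j|\asymp m^4$ for large $m$ and only finitely many modes are exceptional.

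\emph{Step 3: invert the $2\times2$ block.} For fixed $(j,m)\notin\mathcal D_\rho$, separate the $e^{\lambda^{+}}$ and $e^{\lambda^{-}}$ contributions. The pair $(a(T),a'(T))$ is the image of the pair of "$\pm$-components" under the matrix $\bigl(\begin{smallmatrix}1&1\\ \lambda^+&\lambda^-\end{smallmatrix}\bigr)$, up to harmless polynomial corrections from the $\tau^p$ factors. Its inverse is bounded by $C|\A_{j,m}|\,m^2\le C$. Thus \eqref{eq:terminal} is equivalent to prescribing, for each $\sigma\in\{+,-\}$ and each $(k,s)$, a scalar quantity $\Theta^{\sigma}_{j,k,s,m}$. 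This quantity is linear in the vector moments $\mathcal M^{\sigma}_{j,q,m}:=\int_0^T F_jE^{\sigma}_{j,q,m}\,dt\in\C^\ell$ for $q\le s-1$, and it equals a known multiple of $x_m^{-1}$ times the free data $b(T),b'(T)$.

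\emph{Step 4: triangular solve.} The key structural point is that the top-order moment $q=s-1$ enters only through the principal term $r=s$ of Lemma~\ref{Kconv}. It is paired with $\beta^*_{j,k,1}$, with nonzero coefficient $\pm\A_{j,m}^{s}/(s-1)!$. All other chain vectors $\beta_{j,k,s'}$, $s'\ge2$, multiply lower-order moments. I therefore choose $\zeta^{\sigma}_{j,q,m}$ recursively for $q=0,1,\dots,\nu_j-1$. At stage $q$, the $g_j$ scalar equations (one for each $k$ with $r_{j,k}\ge q+1$) have the form
\[
\beta^*_{j,k,1}\zeta^{\sigma}_{j,q,m}=\text{(data)}-\text{(terms in }\zeta^{\sigma}_{j,p,m},\ p<q).
\]
By \eqref{eq:Bhat-rank} the matrix with rows $\beta^*_{j,k,1}$ has full row rank $g_j$, so a fixed right inverse (independent of $m$) solves this linear system. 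The double-root modes in $\mathcal D_\rho$ are finitely many. They are handled the same way with the confluent family of order $2\nu_j$ and Lemma~\ref{Kconv}'s second formula, and their constants are absorbed into $C$.

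\emph{Step 5: bounds.} Each inversion costs at most a polynomial factor in $m$: the factor $\A_{j,m}^{-s}\sim m^{2s}$, the factor $x_m^{-1}\sim m$, and the Step~2 coefficients. The recursion has bounded depth $\nu_j$, so the total cost is $(1+m)^{P}$ for some $P$. Finally, \eqref{eq:Gaussian} gives $|b(T)|+|b'(T)|\le C e^{-\eta m^2T/2}\mathcal E_0$, which yields \eqref{eq:mom} with $c=\eta/2$.

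The main obstacle I expect is the bookkeeping in Steps 1--4. One must verify that the elimination of $f$-moments and the non-principal terms of Lemma~\ref{Kconv} really feed only into lower $q$, so that the system stays triangular. One must also check that the $\A_{j,m}$ powers produce only polynomial, not exponential, growth. I would first check the case $\nu_j=2$ by hand and then induct on $q$.
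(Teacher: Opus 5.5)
Your proposal is correct and follows essentially the same route as the paper. Both expand $a^{(d)}_{j,k,s,m}(T)$ via \eqref{eq:a-iterated} and Lemma~\ref{Kconv}, use the \eqref{nr2}-triangular relation between $f$- and $F_j$-moments, observe that the new top-degree moment enters only through the principal term paired with $\beta^*_{j,k,1}$, apply a fixed right inverse supplied by \eqref{eq:Bhat-rank}, and finish with polynomial bookkeeping combined with \eqref{eq:Gaussian}. The differences are cosmetic: you invert the $2\times2$ $\pm$-system chain by chain and then apply the right inverse, whereas the paper first lifts to $\C^\ell$ with $\widehat C_{j,s}$ and then solves \eqref{eq:level-s-Z-plus}--\eqref{eq:level-s-Z-minus}; also, at stage $q$ the number of active equations is $n_{j,q+1}\le g_j$, not $g_j$.
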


\begin{proof}
For $z=a,b$, write $z^{(0)}:=z$ and $z^{(1)}:=z'$. For
$1\le s\le\nu_j$, set
$\mathcal K_{j,s}:=
\{k\in\{1,\ldots,g_j\}:r_{j,k}\ge s\},
\quad n_{j,s}:=|\mathcal K_{j,s}|,$
and write $\mathcal K_{j,s}=\{k_1,\ldots,k_{n_{j,s}}\}$. Define
\begin{equation}
\label{eq:active-chains}
\widehat B_{j,s}:=
(\beta_{j,k_1,1}\mid\cdots\mid\beta_{j,k_{n_{j,s}},1})^*
\in\C^{n_{j,s}\times\ell}.
\end{equation}
By \eqref{eq:Bhat-rank}, $\widehat B_{j,s}$ has full row rank. Fix a
right inverse
\[
\widehat C_{j,s}\in\C^{\ell\times n_{j,s}},
\quad
\widehat B_{j,s}\widehat C_{j,s}=I_{n_{j,s}}.
\]
Thus, for every $D\in\C^{n_{j,s}}$,
$\widehat B_{j,s}(\widehat C_{j,s}D)=D.$

This is the only inverse identity used below; in general
$\widehat C_{j,s}\widehat B_{j,s}\ne I_\ell$.
For a prospective control $f$, put $F_j:=f''+\gamma_jf$.
We first isolate the simple-root calculation at the eigenvector level
$s=1$; it fixes the degree-zero targets and provides the model for the
higher Jordan levels, while double temporal roots are treated afterwards
by the confluent analogue.

Fix $(j,m)\notin\mathcal D_\rho$ and set
\[
\Delta_{j,m}:=\lambda_{j,m}^+-\lambda_{j,m}^-,
\quad
\A_{j,m}:=\Delta_{j,m}^{-1}.
\]
Thus  $\Delta_{j,m}\ne0$ by assumption, and $x_m\ne0$.
Recall
\begin{equation}
\label{a}
a_{j,k,s,m}
=-K_{j,m}*\Bigl[
a_{j,k,s-1,m}
+x_m\bigl(\beta_{j,k,s}^*F_j+\beta_{j,k,s-1}^*f\bigr)
\Bigr].
\end{equation}
At $s=1$, $n_{j,1}=g_j$ and
$a_{j,k,0,m}=\beta_{j,k,0}^*=0$. For $d=0,1$, denote 
$g^{(d)}(t)=$ the $d$th derivative, and 
set
\[
\vec\eta_{j,1,m}^{\,d}:=
\bigl(b_{j,k_i,1,m}^{(d)}(T)\bigr)_{1\le i\le n_{j,1}}.
\]
Then \eqref{eq:terminal}, \eqref{a}, and Lemma~\ref{Kconv} give
\[
\vec\eta_{j,1,m}^{\,d}
=\A_{j,m}x_m\widehat B_{j,1}
\int_0^T F_j(t)\Bigl[
(\lambda_{j,m}^+)^d e^{\lambda_{j,m}^+(T-t)}
-(\lambda_{j,m}^-)^d e^{\lambda_{j,m}^-(T-t)}
\Bigr]dt .
\]
Since $\widehat B_{j,1}\widehat C_{j,1}=I_{n_{j,1}}$, it is sufficient to impose for $d=0,1$,
\[
\widehat C_{j,1}\vec\eta_{j,1,m}^{\,d}
=\A_{j,m}x_m
\int_0^T F_j(t)\Bigl[
(\lambda_{j,m}^+)^d e^{\lambda_{j,m}^+(T-t)}
-(\lambda_{j,m}^-)^d e^{\lambda_{j,m}^-(T-t)}
\Bigr]dt,
\]
Indeed, multiplying this lifted identity by $\widehat B_{j,1}$
recovers the terminal system. This is a sufficient lifting, not an equivalent reformulation, since $\ker\widehat B_{j,1}$ may be
nontrivial.
Solving these two lifted equations, define
\[
\zeta_{j,0,m}^{\pm}:=
\frac{
\widehat C_{j,1}\vec\eta_{j,1,m}^{\,1}
-\lambda_{j,m}^{\mp}\widehat C_{j,1}\vec\eta_{j,1,m}^{\,0}
}{x_m}.
\]
Thus
\beq
\int_0^T F_j(t)e^{\lambda_{j,m}^{\pm}(T-t)}dt
=\zeta_{j,0,m}^{\pm}\label{mo1}
\eeq
enforces both terminal conditions at $s=1$. The right inverse
$\widehat C_{j,1}$ lifts the $g_j$ chain equations to two targets in
$\C^\ell$, hence no $k$-index occurs. If $A$ is diagonalizable, this
completes the simple-root construction; otherwise, the same lifting is
iterated after the lower-order terms have been fixed.
We apply integration by parts twice to  \eqref{mo1}, using $f\in H_0^2(0,T)$, to get
$$\int_0^T f(t)e^{\lambda_{j,m}^{\pm}(T-t)}dt
=\frac{\zeta_{j,0,m}^{\pm}}{(\lambda_{j,m}^{\pm})^2+\gamma_j}.
$$

\medskip

We now establish moment equations for $s>1$, which is more complicated. This will be an inductive process for which \eqref{mo1} was the first step.

\noindent
\emph{Moment notation and triangular relation.}
For every admissible $(j,q,m,\sigma)$, set
\begin{equation}
\label{eq:Z-moments}
\begin{aligned}
Z_{j,q,m}^{\sigma}(f)
&:=\int_0^T F_j(t)E_{j,q,m}^{\sigma}(t)\,dt,\\
M_{j,q,m}^{\sigma}(f)
&:=\int_0^T f(t)E_{j,q,m}^{\sigma}(t)\,dt.
\end{aligned}
\end{equation}
Here $Z(f),M(f)$ are the actual moments, whereas $\zeta,\mathcal M$
denote their targets. By $q$-moments, we will refer to moments involving $E^\sigma_{j,q,m}$.
Since
$E_{j,0,m}^{\pm}(t)=e^{\lambda_{j,m}^{\pm}(T-t)}$, the base-level
conditions are precisely
$Z_{j,0,m}^{\pm}(f)=\zeta_{j,0,m}^{\pm}.$
Moreover, $f,f'$ vanish at both endpoints and
\[
\bigl(E_{j,q,m}^{\sigma}\bigr)''
=(\lambda_{j,m}^{\sigma})^2E_{j,q,m}^{\sigma}
+2q\lambda_{j,m}^{\sigma}E_{j,q-1,m}^{\sigma}
+q(q-1)E_{j,q-2,m}^{\sigma}.
\]
Hence two integrations by parts yield
\[
\begin{aligned}
Z_{j,q,m}^{\sigma}(f)
={}&\bigl((\lambda_{j,m}^{\sigma})^2+\gamma_j\bigr)
M_{j,q,m}^{\sigma}(f)\\
&+2q\lambda_{j,m}^{\sigma}M_{j,q-1,m}^{\sigma}(f)
+q(q-1)M_{j,q-2,m}^{\sigma}(f),
\end{aligned}
\]
where moments of negative degree are zero. For prescribed
$Z$-targets, define the corresponding $M$-targets by
\begin{equation}
\label{eq:target-f-moments}
\mathcal M_{j,q,m}^{\sigma}:=
\frac{
\zeta_{j,q,m}^{\sigma}
-2q\lambda_{j,m}^{\sigma}\mathcal M_{j,q-1,m}^{\sigma}
-q(q-1)\mathcal M_{j,q-2,m}^{\sigma}
}{(\lambda_{j,m}^{\sigma})^2+\gamma_j},
\end{equation}
with
$\mathcal M_{j,-1,m}^{\sigma}
=\mathcal M_{j,-2,m}^{\sigma}:=0$.
This is well defined by \eqref{nr2}, and the preceding triangular
identity gives
\[
\begin{gathered}
Z_{j,r,m}^{\sigma}(f)=\zeta_{j,r,m}^{\sigma},
\quad 0\le r\le q,\\
\Longrightarrow\quad
M_{j,r,m}^{\sigma}(f)=\mathcal M_{j,r,m}^{\sigma},
\quad 0\le r\le q.
\end{gathered}
\]
Consequently, at a simple-root level $s\ge2$, the target values of all
$Z$- and $M$-moments of degrees at most $s-2$ are fixed; they determine
the lower-order residual, and only the two $Z$-targets of degree $s-1$
are new. Returning to the
simple-root pair fixed above, fix $2\le s\le\nu_j$, suppose that
$\zeta_{j,q,m}^{\pm}$ are prescribed for $0\le q\le s-2$, and set
$\kappa_{j,s,m}:=
\frac{x_m\A_{j,m}^{s}}{(s-1)!}\ne0.$

For $d=0,1$, \eqref{eq:a-iterated} and Lemma~\ref{Kconv} give
\begin{equation}
\label{eq:leading}
\begin{aligned}
a_{j,k,s,m}^{(d)}(T)
={}&\kappa_{j,s,m}\beta_{j,k,1}^*
\Bigl[(-1)^s(\lambda_{j,m}^+)^d
Z_{j,s-1,m}^+(f)
+(\lambda_{j,m}^-)^d Z_{j,s-1,m}^-(f)\Bigr]\\
&+\mathcal L_{j,k,s,m}^d(f).
\end{aligned}
\end{equation}
Here
\[
\mathcal L_{j,k,s,m}^d(f)
=\sum_{\sigma\in\{+,-\}}\sum_{q=0}^{s-2}
\Bigl(
A_{j,k,s,m,q}^{d,\sigma}Z_{j,q,m}^{\sigma}(f)
+B_{j,k,s,m,q}^{d,\sigma}M_{j,q,m}^{\sigma}(f)
\Bigr),
\]
where
$A_{j,k,s,m,q}^{d,\sigma},B_{j,k,s,m,q}^{d,\sigma}
\in\C^{1\times\ell}$ are determined by
\eqref{eq:a-iterated} and Lemma~\ref{Kconv}, and have at most
polynomial growth in $m$.
Replacing $Z_{j,q,m}^{\sigma}(f)$ and
$M_{j,q,m}^{\sigma}(f)$ in the preceding expression by
$\zeta_{j,q,m}^{\sigma}$ and $\mathcal M_{j,q,m}^{\sigma}$,
respectively, defines the known scalar
$\mathcal L_{j,k,s,m}^{d,\mathrm{tar}}$ by the induction hypothesis.

Set
\[
\begin{aligned}
D_{j,s,m}^d
&:=\Bigl(
-b_{j,k_i,s,m}^{(d)}(T)
-\mathcal L_{j,k_i,s,m}^{d,\mathrm{tar}}
\Bigr)_{i=1}^{n_{j,s}}
\in\C^{n_{j,s}},\\
U_{j,s,m}^d
&:=\kappa_{j,s,m}^{-1}\widehat C_{j,s}D_{j,s,m}^d
\in\C^\ell.
\end{aligned}
\]
It is sufficient to impose
\begin{align}
(-1)^s\zeta_{j,s-1,m}^++\zeta_{j,s-1,m}^-
&=U_{j,s,m}^0,
\label{eq:lifted}\\
(-1)^s\lambda_{j,m}^+\zeta_{j,s-1,m}^+
+\lambda_{j,m}^-\zeta_{j,s-1,m}^-
&=U_{j,s,m}^1.
\label{eq:lifted1}
\end{align}
Indeed,
$\kappa_{j,s,m}\widehat B_{j,s}U_{j,s,m}^d=D_{j,s,m}^d$.
Moreover, whenever
\[
Z_{j,q,m}^{\sigma}(f)=\zeta_{j,q,m}^{\sigma},
\quad
\sigma\in\{+,-\},\quad 0\le q\le s-1,
\]
the triangular relation gives
$M_{j,q,m}^{\sigma}(f)=\mathcal M_{j,q,m}^{\sigma}$ for
$0\le q\le s-2$, and hence
$\mathcal L_{j,k,s,m}^d(f)
=\mathcal L_{j,k,s,m}^{d,\mathrm{tar}}$.
Thus \eqref{eq:leading} and \eqref{eq:lifted}--\eqref{eq:lifted1}
yield
\[
a_{j,k,s,m}^{(d)}(T)=-b_{j,k,s,m}^{(d)}(T),
\quad d=0,1.
\]

Since $\Delta_{j,m}\ne0$, the lifted system has the solution
\begin{align}
\zeta_{j,s-1,m}^+
&=(-1)^s\frac{U_{j,s,m}^1
-\lambda_{j,m}^-U_{j,s,m}^0}{\Delta_{j,m}},
\label{eq:level-s-Z-plus}\\
\zeta_{j,s-1,m}^-
&=\frac{\lambda_{j,m}^+U_{j,s,m}^0
-U_{j,s,m}^1}{\Delta_{j,m}}.
\label{eq:level-s-Z-minus}
\end{align}

\medskip
\noindent
\emph{Double roots.}
Assume $(j,m)\in\mathcal D_\rho$, write
$\lambda:=\lambda_{j,m}^c$, and fix $1\le s\le\nu_j$.
Suppose that the targets $\zeta_{j,q,m}^c$ have already been
prescribed for $0\le q\le2s-3$, with an empty range when $s=1$.
Set
$\kappa_{j,s,m}^c
:=
\frac{(-1)^s x_m}{(2s-1)!}\ne0.$
By \eqref{eq:double-convolution},
\begin{align}
a_{j,k,s,m}(T)
&=
\kappa_{j,s,m}^c\beta_{j,k,1}^*
Z_{j,2s-1,m}^c(f)
+\mathcal L_{j,k,s,m}^{c,0}(f),
\label{eq:double-leading-position}\\
a'_{j,k,s,m}(T)
&=
\kappa_{j,s,m}^c\beta_{j,k,1}^*
\Bigl(
\lambda Z_{j,2s-1,m}^c(f)
\notag\\[-0.2em]
&\qquad
+(2s-1)Z_{j,2s-2,m}^c(f)
\Bigr)
\notag\\
&\qquad
+\mathcal L_{j,k,s,m}^{c,1}(f).
\label{eq:double-leading-velocity}
\end{align}
Here $\mathcal L_{j,k,s,m}^{c,d}(f)$ depends only on the
moments $Z_{j,q,m}^c(f)$ and $M_{j,q,m}^c(f)$ with
$q\le2s-3$, and vanishes when $s=1$. Let
$\mathcal L_{j,k,s,m}^{c,d,\mathrm{tar}}$ be obtained by replacing
these moments by their targets
$\zeta_{j,q,m}^c$ and $\mathcal M_{j,q,m}^c$.

Define
\[
\begin{aligned}
D_{j,s,m}^{c,d}
&:=
\left(
-b_{j,k_i,s,m}^{(d)}(T)
-\mathcal L_{j,k_i,s,m}^{c,d,\mathrm{tar}}
\right)_{i=1}^{n_{j,s}},\\
U_{j,s,m}^{c,d}
&:=
(\kappa_{j,s,m}^c)^{-1}
\widehat C_{j,s}D_{j,s,m}^{c,d}.
\end{aligned}
\]
It is sufficient to impose
\begin{align}
\zeta_{j,2s-1,m}^c
&=U_{j,s,m}^{c,0},
\label{eq:double-lifted-position}\\
\lambda\zeta_{j,2s-1,m}^c
+(2s-1)\zeta_{j,2s-2,m}^c
&=U_{j,s,m}^{c,1}.
\label{eq:double-lifted-velocity}
\end{align}
Indeed, whenever the lower-degree moment conditions hold,
\[
\mathcal L_{j,k,s,m}^{c,d}(f)
=
\mathcal L_{j,k,s,m}^{c,d,\mathrm{tar}},
\qquad
\kappa_{j,s,m}^c
\widehat B_{j,s}U_{j,s,m}^{c,d}
=
D_{j,s,m}^{c,d}.
\]
Thus the terminal equations hold at level $s$, and
\begin{align}
\zeta_{j,2s-1,m}^c
&=U_{j,s,m}^{c,0},
\label{eq:double-Z-high}\\
\zeta_{j,2s-2,m}^c
&=
\frac{
U_{j,s,m}^{c,1}
-\lambda U_{j,s,m}^{c,0}
}{2s-1}.
\label{eq:double-Z-low}
\end{align}
This completes the recursive prescription in both cases. For any
$f$ satisfying \eqref{eq:moment-system}, two integrations by parts and
induction on the degree give
$\int_0^T fE_{j,q,m}^{\sigma}=\mathcal M_{j,q,m}^{\sigma}$. Hence the
residual terms are exactly those used in the construction above, and the
lifted equations imply \eqref{eq:terminal}.

\medskip
\noindent
\emph{Estimates.}
By~(3.19), after decreasing $c>0$ if necessary, there exists
$\alpha_0\geq 0$ such that, uniformly in the admissible indices,
\[
 \bigl|b^{(d)}_{j,k,s,m}(T)\bigr|
 \leq C(1+m)^{\alpha_0}e^{-cm^2T}E_0,
 \quad d=0,1.
\]
We prove by induction on $s$ that all the targets already
constructed, together with the associated $M$-targets, satisfy the same estimate with some exponent $\alpha_s$.
For $s=1$, this follows from~(4.6) and~(4.8), using, for large
$m$,
\[
 |x_m|^{-1}=O(m),\quad
|\lambda^\pm_{j,m}|=O(m^2),\quad
 \bigl|(\lambda^\pm_{j,m})^2+\gamma_j\bigr|^{-1}=O(m^{-4}).
\]
Assume the assertion up to level $s-1$. By~(3.13) and Lemma~6,
the coefficients entering $L^{d,\mathrm{tar}}_{j,k,s,m}$ are
finite combinations of powers of $x_m$, $C_{j,m}$ and
$\lambda^\pm_{j,m}$; hence they have at most polynomial growth
in $m$. Moreover,
\[
 |\kappa_{j,s,m}^{-1}|
 \leq C(1+m)^{2s+1},
 \quad
 |\Delta_{j,m}^{-1}|
 \leq C(1+m)^{-2}
\]
for all sufficiently large $m$. Therefore
$L^{d,\mathrm{tar}}_{j,k,s,m}$, $D^d_{j,s,m}$ and
$U^d_{j,s,m}$ satisfy the same Gaussian estimate, with a
possibly larger polynomial exponent. Formulas~(4.12)--(4.13)
and then~(4.8) yield the assertion at level $s$.
The remaining simple modes and all double-root modes form a
finite set. Since the Jordan-chain lengths are bounded, taking
 $P=\max_s\alpha_s$
and enlarging $C$ proves~(4.3).
\end{proof}

\begin{proof}[Proof of Theorem~\ref{thm:subcritical}]
\noindent
\emph{ Solution of the moment problem.}
Let
\[
\mathcal I:=\left\{(j,q,m,\sigma):
1\le j\le p_0,\ m\ge1,\ \sigma\in\Sigma_{j,m},
\ 0\le q<d_{j,m}^{\sigma}\right\}.
\]
By Proposition~\ref{prop:moment}, it remains to construct a single
$f\in H_0^2(0,T;\C^\ell)$ satisfying \eqref{eq:moment-system} for all
indices in $\mathcal I$.

Recall that $\alpha$ was chosen so that
\[
(\lambda_{j,m}^{\sigma})^2+\alpha\ne0
\quad
((j,q,m,\sigma)\in\mathcal I).
\]
Moreover, \eqref{nr2} gives
$(\lambda_{j,m}^{\sigma})^2+\gamma_j\ne0
\quad
((j,q,m,\sigma)\in\mathcal I),$
so that the following recursion is well defined.
For each index, write $\lambda=\lambda_{j,m}^{\sigma}$, set
$Y_{j,-1,m}^{\sigma}=Y_{j,-2,m}^{\sigma}:=0$, and define
\begin{equation}
\label{eq:Y-recursion}
Y_{j,q,m}^{\sigma}:=
\frac{\zeta_{j,q,m}^{\sigma}
-2q\lambda Y_{j,q-1,m}^{\sigma}
-q(q-1)Y_{j,q-2,m}^{\sigma}}
{\lambda^2+\gamma_j}.
\end{equation}
At this stage, the $Y_{j,q,m}^{\sigma}$ are only auxiliary target
values for the moments of the control; no control has yet been assumed
to realize them. Define
\begin{equation}
\label{eq:Xi}
\Xi_{j,q,m}^{\sigma}:=
(\lambda^2+\alpha)Y_{j,q,m}^{\sigma}
+2q\lambda Y_{j,q-1,m}^{\sigma}
+q(q-1)Y_{j,q-2,m}^{\sigma}.
\end{equation}
By \eqref{eq:Y-recursion}, this is equivalently
\begin{equation}
\label{eq:Xi-correction}
\Xi_{j,q,m}^{\sigma}
=
\zeta_{j,q,m}^{\sigma}
-(\gamma_j-\alpha)Y_{j,q,m}^{\sigma}.
\end{equation}
This identity corresponds to
$f''+\gamma_jf
=
(f''+\alpha f)+(\gamma_j-\alpha)f.$
Using the block-biorthogonal family from
\eqref{eq:block2}--\eqref{eq:block4}, including the confluent functions
when $(j,m)\in D_\rho$, set
\begin{equation}
\label{eq:G-series}
G(t):=
\sum_{(j,q,m,\sigma)\in\mathcal I}
\Xi_{j,q,m}^{\sigma}g_{j,q,m}^{\sigma}(t).
\end{equation}
For all sufficiently large modes,
\[
|\lambda_{j,m}^{\sigma}|\asymp m^2,
\quad
|(\lambda_{j,m}^{\sigma})^2+\gamma_j|\asymp m^4,
\]
while the finitely many remaining denominators are nonzero by
\eqref{nr2}. Hence \eqref{eq:mom}, \eqref{eq:Y-recursion}, and
\eqref{eq:Xi} give
\begin{equation}
\label{eq:Xi-decay}
|\Xi_{j,q,m}^{\sigma}|
\le C_T(1+m)^P e^{-cm^2T}\mathcal E_0.
\end{equation}
Using \eqref{eq:block4},
\[
\sum_{(j,q,m,\sigma)\in\mathcal I}
|\Xi_{j,q,m}^{\sigma}|\ 
\|g_{j,q,m}^{\sigma}\|_{L^2}
\le
C_T\mathcal E_0
\sum_{m\ge1}(1+m)^{P+M}e^{-cm^2T+C_Tm}
<\infty.
\]
Thus \eqref{eq:G-series} converges absolutely in $L^2$ and
$\|G\|_{L^2}\le C_T\mathcal E_0$. By biorthogonality,
\begin{equation}
\label{eq:G-moments}
\int_0^TGE_{j,q,m}^{\sigma}\,dt
=\Xi_{j,q,m}^{\sigma},
\quad
\int_0^TG\chi_\alpha^d\,dt=0,
\quad d=0,1.
\end{equation}
The first identity is an equality in $\C^\ell$ and therefore holds
componentwise: all scalar equations contained in every prescribed
vector moment are satisfied simultaneously.

Finally,  we now recover the physical control from the single reference equation
\begin{equation}
\label{eq:reference-ode}
f''+\alpha f=G,
\quad
f(0)=f'(0)=0.
\end{equation}
Writing $\omega^2=-\alpha$, its solution is
\[
f(t)=
\begin{cases}
\displaystyle
\frac1{2\omega}\int_0^t
\bigl(e^{\omega(t-r)}-e^{-\omega(t-r)}\bigr)G(r)\,dr,
&\omega\ne0,\\[3mm]
\displaystyle
\int_0^t(t-r)G(r)\,dr,
&\omega=0.
\end{cases}
\]
If $\omega\ne0$, put
$M_\pm:=\int_0^T e^{\pm\omega(T-r)}G(r)\,dr.$
Then
\[
f(T)=\frac{M_+-M_-}{2\omega},
\quad
f'(T)=\frac{M_++M_-}{2}.
\]
The auxiliary conditions in \eqref{eq:G-moments} give
$M_+=M_-=0$. If $\omega=0$, they instead give
\[
f(T)=\int_0^T(T-r)G(r)\,dr=0,
\quad
f'(T)=\int_0^TG(r)\,dr=0.
\]
Consequently, $f(T)=f'(T)=0$ and
$f\in H_0^2(0,T;\C^\ell)$.
It remains to verify the original moment system. Put
\[
\widehat Y_{j,q,m}^{\sigma}:=
\int_0^Tf(t)E_{j,q,m}^{\sigma}(t)\,dt,
\quad
\widehat Y_{j,-1,m}^{\sigma}
=\widehat Y_{j,-2,m}^{\sigma}:=0.
\]
Two integrations by parts, using all four endpoint traces, yield
\[
\Xi_{j,q,m}^{\sigma}
=(\lambda^2+\alpha)\widehat Y_{j,q,m}^{\sigma}
+2q\lambda\widehat Y_{j,q-1,m}^{\sigma}
+q(q-1)\widehat Y_{j,q-2,m}^{\sigma}.
\]
Comparison with \eqref{eq:Xi} and induction on $q$ give
$\widehat Y_{j,q,m}^{\sigma}=Y_{j,q,m}^{\sigma}
\hbox{ }((j,q,m,\sigma)\in\mathcal I),$
because $\lambda^2+\alpha\ne0$. Consequently,
\begin{align*} 
&\int_0^T(f''+\gamma_jf)E_{j,q,m}^{\sigma}\,dt = \Xi_{j,q,m}^{\sigma} +(\gamma_j-\alpha)Y_{j,q,m}^{\sigma} \\ 
&= (\lambda^2+\gamma_j)Y_{j,q,m}^{\sigma} +2q\lambda Y_{j,q-1,m}^{\sigma} +q(q-1)Y_{j,q-2,m}^{\sigma} = \zeta_{j,q,m}^{\sigma}, 
\end{align*}
where the last equality is \eqref{eq:Y-recursion}. Hence the single control $f$ satisfies every vector moment equation constructed above, and therefore every scalar component of those equations.
Proposition~\ref{prop:moment} gives \eqref{eq:terminal}.
Since $f(T)=f'(T)=0$, the lifting term and its time derivative vanish at
$T$. Hence
$u(\cdot,T)=0,
\quad
u_t(\cdot,T)=0.$
Finally, the solution operator in \eqref{eq:reference-ode} maps $L^2$
continuously into $H^2$, and the preceding estimate on $G$ gives
\[
\|f\|_{H^2(0,T;\C^\ell)}
\le C_T\|G\|_{L^2(0,T;\C^\ell)}
\le C_T\mathcal E_0.
\]
This is \eqref{eq:control}.
\end{proof}
\section{The critical value \texorpdfstring{$\rho=2$}{rho=2}}
\label{sec:critical}
At the critical value \(\rho=2\), the principal differential expression
factorizes as \((\partial_t+L)^2\). Since a controlled trajectory need not
belong to \(D(L)\), we define the second component using the differential
expression \(-u_{xx}\). Set \(Q_T:=(0,\pi)\times(0,T)\) and
\[
w:=u_t-u_{xx},\quad
Y:=\binom{u}{w},\quad
U_f:=\binom{Bf}{Bf'},\quad
\mathcal M:=
\begin{pmatrix}
0&I_N\\
-A&0
\end{pmatrix}.
\]
For smooth compatible data, \(u_t=u_{xx}+w\) and
\(w_t=w_{xx}-Au\). Hence
\begin{equation}\label{eq:critical-heat-system}
\left\{
\begin{aligned}
Y_t&=Y_{xx}+\mathcal M Y
&&\text{in }Q_T,\\
Y(0,t)&=0,\quad Y(\pi,t)=U_f(t)
&&t\in(0,T),\\
Y(\cdot,0)&=
(u^0,u^1-u^0_{xx})^\top
=
(u^0,u^1+Lu^0)^\top.
\end{aligned}
\right.
\end{equation}
Indeed, the original boundary conditions give
\(w(0,t)=0\) and \(w(\pi,t)=Bf'(t)\).
To justify this reduction in the stated solution class, let
\(h(x):=x/\pi\) and set \(Z:=Y-hU_f\). Since
\(f\in H_0^2(0,T;\mathbb C^\ell)\), we have
\(f(0)=f'(0)=0\), and \(Z\) solves
\begin{equation}\label{eq:critical-lifting}
\left\{
\begin{aligned}
Z_t&=Z_{xx}+\mathcal MZ
-h(0,Bf''+ABf)^\top
&&\text{in }Q_T,\\
Z(0,t)&=Z(\pi,t)=0
&&t\in(0,T),\\
Z(\cdot,0)&=
(u^0,u^1+Lu^0)^\top.
\end{aligned}
\right.
\end{equation}
The source in \eqref{eq:critical-lifting} belongs to
$L^2(0,T;\mathbf X^0\times\mathbf X^0)$. Hence parabolic maximal
regularity gives
\[
Z\in
\left[
H^1(0,T;\mathbf X^0)
\cap L^2(0,T;\mathbf X^2)
\cap C([0,T];\mathbf X^1)
\right]^2.
\]
Writing $Z=(v,\zeta)^\top$, the first component yields
$\zeta=v_t+Lv$. Since $\zeta\in L^2(0,T;\mathbf X^2)$ and
$v(0)=u^0\in\mathbf X^3$, a second maximal-regularity argument gives
\[
v\in
H^1(0,T;\mathbf X^2)
\cap L^2(0,T;\mathbf X^4)
\cap C([0,T];\mathbf X^3)\hbox{, }
v_t\in C([0,T];\mathbf X^1).
\]

Thus, setting $u=v+hBf$, we recover
$Y=Z+hU_f=(u,u_t-u_{xx})^\top.$
Moreover, since $f(0)=f'(0)=0$,
\[
u(0)=u^0,
\quad
u_t(0)=\zeta(0)-Lu^0=u^1,
\]
and $u$ satisfies the prescribed boundary conditions. Here only the
differential identities $h''=h^{(4)}=0$ are used, since
$h\notin D(L)$; the argument extends from smooth compatible data by
density.

Finally, $f(T)=f'(T)=0$ implies $Y(T)=Z(T)$, and
\begin{equation}\label{equivalence-proof}
Y(T)=0
\quad\Longleftrightarrow\quad
u(T)=u_t(T)=0.
\end{equation}
Hence the beam equation is exactly reduced to a $2N$-component heat
system with linked boundary inputs $(Bf,Bf')$, a constraint not covered
by the independent-control result of~\cite{AKBGT11}.
\begin{lemma}
\label{lem:biorthogo}
Let $\mathcal V\subset i\mathbb R$ be finite, without repeated values, and set
 $\Lambda_{m,\nu}=m^2-\nu,
 \quad m\ge1,\quad \nu\in\mathcal V .$
For every $T>0$, there exist functions
$\Theta_{m,\nu,q}\in L^2(0,T)$, $q=0,1$, satisfying
\begin{align}
 \int_0^T t^r e^{-\Lambda_{n,\widetilde\nu}t}
 \Theta_{m,\nu,q}(t)\,dt
 &=
 \delta_{mn}\delta_{\nu\widetilde\nu}\delta_{qr},
 \label{eq:critical-biorthogonality}\\
 \int_0^T\Theta_{m,\nu,q}(t)\,dt
 &=
 \int_0^T(T-t)\Theta_{m,\nu,q}(t)\,dt=0,
 \label{eq:critical-finite-annihilation}
\end{align}
for $r=0,1$. Moreover, for every $\varepsilon>0$,
$\|\Theta_{m,\nu,q}\|_{L^2(0,T)}
 \le C_{T,\varepsilon}e^{\varepsilon m^2}.$
Consequently, if $d_{m,\nu,q}\in\mathbb C^\ell$ satisfy
 $\sum_{m,\nu,q}e^{\varepsilon m^2}
 \lvert d_{m,\nu,q}\rvert<\infty$
for some $\varepsilon>0$, then there exists
$G\in L^2(0,T;\mathbb C^\ell)$ such that
\begin{align*}
 \int_0^Tt^qe^{-\Lambda_{m,\nu}t}G(t)\,dt
 =d_{m,\nu,q},
 \int_0^TG(t)\,dt
 =\int_0^T(T-t)G(t)\,dt=0.
\end{align*}
\end{lemma}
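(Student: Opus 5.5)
The plan is to mirror the proof of Lemma~\ref{lem:biorthogonality} and apply the block moment theory of \cite{BoyerMorancey2023} to a shifted node set. The only new features are that the nodes now lie on finitely many horizontal lines, $\Lambda_{m,\nu}=m^2-\nu$ with $\nu\in i\mathbb R$, each with multiplicity two, and that there is one additional node carrying the two annihilation conditions.

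\textbf{Step 1: the node set.} Fix $a_0>0$ and set $\mu_{m,\nu}:=a_0+\Lambda_{m,\nu}$, which has real part $a_0+m^2$ and imaginary part $-\nu$. Since $\mathcal V$ has no repeated values, the nodes $\mu_{m,\nu}$ are pairwise distinct.
\begin{itemize}
\item Nodes with different $m$ satisfy $|\mu_{m,\nu}-\mu_{n,\widetilde\nu}|\ge|m^2-n^2|\ge m+n$.
\item Nodes with the same $m$ are separated by $\min_{\nu\ne\widetilde\nu}|\nu-\widetilde\nu|>0$, uniformly in $m$.
\end{itemize}
The annihilation conditions \eqref{eq:critical-finite-annihilation} are moments of $\Theta$ against $1$ and $T-t$. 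After time reversal $s=T-t$ and multiplication by $e^{-a_0 s}$ (the conversion used in the proof of Lemma~\ref{lem:biorthogonality}, with the roles of $t$ and $T-t$ matched accordingly), they become moments of order $0,1$ at the single node $a_0$, so $a_0$ carries multiplicity $2$. The node $a_0$ is distinct from every $\mu_{m,\nu}$ because $\Re\mu_{m,\nu}\ge a_0+1$. With $p=\#\mathcal V$, the set $\Lambda_*:=\{\mu_{m,\nu}\}\cup\{a_0\}$ therefore lies in a sector, has counting function $N(r)\sim \#\mathcal V\,\sqrt r$ (exponent $1/2$), and satisfies a weak-gap condition with groups $G_m=\{\mu_{m,\nu}:\nu\in\mathcal V\}$, each of which actually has a uniform gap. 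Hence $\Lambda_*\in\mathcal L_w(p,\varrho,\tau_0,1/2,\kappa)$.

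\textbf{Step 2: interpolation.} Assign multiplicity $2$ to every node and apply \cite[Theorem~46]{BoyerMorancey2023} with datum $1$ at $(\mu_{m,\nu},q)$ and zero elsewhere. This gives the moment identities against $t^r e^{-\Lambda t}$, with the factorial and sign normalization handled as in \eqref{mom}, together with vanishing moments at $a_0$.

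\textbf{Step 3: the main obstacle, the bound $C_{T,\varepsilon}e^{\varepsilon m^2}$.} The estimate \cite[(116)]{BoyerMorancey2023} gives a bound of the form (divided-difference factor)$\cdot e^{C\sqrt{r_G}}\cdot e^{-T r_G}\cdot(\text{weight})$. The weight $e^{-a_0(\cdot)}$ is harmless.
\begin{itemize}
\item Because the gap inside each group is uniform, the divided-difference factor is bounded uniformly in $m$. This is better than the polynomial factor in Lemma~\ref{lem:biorthogonality}.
\item Since $r_{G_m}\asymp m^2$, the exponential factor is at most $e^{Cm}$.
\end{itemize}
The exact form of the exponent in (116) must be checked: if it carries $e^{+T r_G}$ from undoing the shift, this has to be absorbed. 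I expect the $e^{-\Lambda t}$ versus $e^{\Lambda(T-t)}$ normalization to make the biorthogonal family grow only like $e^{Cm}$. In any case $e^{Cm}\le C_\varepsilon e^{\varepsilon m^2}$, which gives the stated bound for every $\varepsilon>0$.

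\textbf{Step 4: the consequence.} Define $G:=\sum_{m,\nu,q} d_{m,\nu,q}\Theta_{m,\nu,q}$, applied componentwise in $\mathbb C^\ell$. The summability hypothesis together with the bound from Step 3 (taken with the same $\varepsilon$) gives absolute convergence in $L^2$. The required moments and the two annihilation identities then follow termwise from \eqref{eq:critical-biorthogonality}--\eqref{eq:critical-finite-annihilation}, by continuity of the pairing in $L^2$.
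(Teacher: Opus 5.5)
Your argument is correct, but it uses a different key result from the one the paper cites. The paper only says that the lemma is an application of \cite[Theorem~1.2]{AKBGT11} along the lines of Lemma~\ref{lem:biorthogonality}. Presumably one applies the same shift, adjoins the node $a_0$ with multiplicity two, and observes that $\{m^2+a_0-\nu\}\cup\{a_0\}$ has bounded imaginary parts, summable reciprocals and a \emph{uniform} gap. That theorem then yields a family biorthogonal to $\{t^re^{-\Lambda t}\}$ with norms $C_{T,\varepsilon}e^{\varepsilon\Re\Lambda}$, which is exactly the stated bound, with no grouping and no divided-difference estimates.

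You instead run the block moment machinery of \cite{BoyerMorancey2023} exactly as in Lemma~\ref{lem:biorthogonality}. This works and even gives the sharper growth $e^{Cm}$. The grouping $G_m$ is superfluous here, though: since the gap is uniform, each node can be its own group ($p=1$), and that is precisely why your divided-difference factor is bounded.

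Two points of presentation. First, no time reversal should enter this lemma. The moments are already taken against $t^re^{-\Lambda t}$ in the forward variable, and the substitution $s=T-t$ would turn them into growing exponentials $e^{\Lambda s}$. Only the shift $\widetilde\Theta=e^{a_0t}\Theta$ is needed. The annihilation conditions then become the order $0,1$ moments at $a_0$ via $\int_0^T(T-t)\Theta\,dt=T\int_0^T\Theta\,dt-\int_0^Tt\,\Theta\,dt$. Second, your worry in Step~3 about a factor $e^{+Tr_G}$ is moot for the same reason: undoing the shift only multiplies by $e^{-a_0t}$, which is bounded by $1$.
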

The proof of this lemma, an application of \cite[Theorem~1.2]{AKBGT11} that resembles the proof of Lemma 2, and is left to the reader.
\begin{proof}[Proof of Theorem~\ref{thm:critical}]
With $\varphi_m$ as above, set
 $\mu_m:=m^2$,
$ \kappa_m:=\varphi_m'(\pi)
 =(-1)^m\sqrt{\frac{2}{\pi}}\,m.$
 
\medskip
\noindent\textbf{Step 1: reduction to the moment equations.}
The moment conditions are obtained by testing the state equation against
solutions of the adjoint system. Let \(P\) solve
\begin{equation}\label{eq:critical-adjoint}
\left\{
\begin{aligned}
-P_t&=P_{xx}+\mathcal M^*P
&&\text{in }(0,\pi)\times(0,T),\\
P(0,t)&=P(\pi,t)=0
&&t\in(0,T).
\end{aligned}
\right.
\end{equation}
Green's formula gives for \(Y(\cdot,T)=0\),
\begin{equation}\label{eq:critical4}
\int_0^T
\left\langle
\binom{Bf(t)}{Bf'(t)},P_x(\pi,t)
\right\rangle\,dt
=
\langle Y^0,P(\cdot,0)\rangle .
\end{equation}
Conversely, if \eqref{eq:critical4} holds for every adjoint solution
whose terminal datum belongs to the algebraic span of the adjoint modes,
Green's formula gives
$\langle Y(\cdot,T),P(\cdot,T)\rangle=0$
on a dense subspace of $L^2(0,\pi;\mathbb C^{2N})$. Hence $Y(\cdot,T)=0$.
Thus null controllability is equivalent to the family of moment identities obtained from \eqref{eq:critical4}.
For \(r=0,1\), set
\[
u_m^r:=\int_0^\pi u^r(x)\varphi_m(x)\,dx,\quad
Y_m^0:=(u_m^0,u_m^1+\mu_m u_m^0)^\top.
\]
Thus \(Y_m^0\) is the \(m\)-th Fourier coefficient of \(Y^0\).

For \(\gamma\geq0\), set
$E_\gamma:=\ker(A-\gamma I_N),\quad
g_\gamma:=\dim E_\gamma.$
For every \(\gamma\in\sigma(A)\), choose an orthonormal basis
\(\{b_{\gamma,k}:1\leq k\leq g_\gamma\}\) of \(E_\gamma\), and define
\[
\mathcal O_\gamma:\mathbb C^\ell\longrightarrow\mathbb C^{g_\gamma},
\quad
\mathcal O_\gamma z
:=
\bigl(
\langle z,B^*b_{\gamma,k}\rangle_{\mathbb C^\ell}
\bigr)_{k=1}^{g_\gamma}.
\]
Since \(A=A^*\), condition \eqref{eq:Hautus} implies that
\(B^*|_{E_\gamma}\) is injective. Hence \(\mathcal O_\gamma\) is
surjective. Fix a right inverse
\[
\mathcal R_\gamma:\mathbb C^{g_\gamma}\longrightarrow\mathbb C^\ell,
\quad
\mathcal O_\gamma\mathcal R_\gamma=I_{g_\gamma}.
\]
\medskip
\noindent\textbf{Step 2: modes associated with \(\gamma>0\).}
Let
\[
\gamma\in\sigma(A)\cap(0,\infty),
\quad
\nu\in\{-i\sqrt\gamma,i\sqrt\gamma\},
\quad
b\in E_\gamma,
\]
and set
$z_{\nu,b}:=(-\nu b,b)^\top.$
Since
$\mathcal M^*z_{\nu,b}=-\nu z_{\nu,b},$
the function
$P_{m,\nu,b}(x,t)
:=
\varphi_m(x)e^{(-\mu_m-\nu)(T-t)}z_{\nu,b}$
is a backward adjoint mode.
Set $a_{m,\nu}:=-\mu_m+\nu,
\quad
F(s):=f(T-s).$
Testing \eqref{eq:critical4} with \(P_{m,\nu,b}\) and arguing as in
Section~4, using \(f(0)=f(T)=0\), gives
\[
\kappa_m(-\mu_m+2\nu)
\int_0^T
\langle F(s),B^*b\rangle_{\mathbb C^\ell}
e^{a_{m,\nu}s}\,ds
=
e^{a_{m,\nu}T}
\langle Y_m^0,z_{\nu,b}\rangle_{\mathbb C^{2N}}.
\]
Applying this identity to the basis
\(\{b_{\gamma,k}\}_{k=1}^{g_\gamma}\), we obtain
\[
\mathcal O_\gamma
\left(
\int_0^T F(s)e^{a_{m,\nu}s}\,ds
\right)
=
\frac{e^{a_{m,\nu}T}}
{\kappa_m(-\mu_m+2\nu)}
\bigl(
\langle Y_m^0,z_{\nu,b_{\gamma,k}}\rangle
\bigr)_{k=1}^{g_\gamma}.
\]
Since
\(\mathcal O_\gamma\mathcal R_\gamma=I_{g_\gamma}\),
it is sufficient to impose
$\int_0^T F(s)e^{a_{m,\nu}s}\,ds
=
I_{m,\nu,0},$
where
\begin{equation}\label{eq:positiv1}
I_{m,\nu,0}
:=
\mathcal R_\gamma
\left[
\frac{e^{a_{m,\nu}T}}
{\kappa_m(-\mu_m+2\nu)}
\bigl(
\langle Y_m^0,z_{\nu,b_{\gamma,k}}\rangle
\bigr)_{k=1}^{g_\gamma}
\right].
\end{equation}
As required for the application of Lemma~9, we also set
$I_{m,\nu,1}:=0.$

\medskip
\noindent\textbf{Step 3: the zero eigenvalue.}
Assume \(0\in\sigma(A)\). For \(b\in E_0\), set
$z_{0,b}:=(0,b)^\top,
z_{1,b}:=(b,0)^\top.$
Then
$\mathcal M^*z_{0,b}=0$, $\mathcal M^*z_{1,b}=z_{0,b},$
so the associated backward adjoint modes are
$\varphi_m(x)e^{-\mu_m(T-t)}z_{0,b}$
and
$\varphi_m(x)e^{-\mu_m(T-t)}
\bigl(z_{1,b}+(T-t)z_{0,b}\bigr).$
Using \(F(s)=f(T-s)\), introduce
$I_{m,0,q}
:=
\int_0^T s^qF(s)e^{-\mu_ms}\,ds\hbox{, }
 q=0,1.$
As in Step~2, integration by parts and
\(F(0)=F(T)=0\) give
\begin{align*}
-\int_0^T F'(s)e^{-\mu_ms}\,ds
&=-\mu_m I_{m,0,0},\\
\int_0^T\bigl(F(s)-sF'(s)\bigr)e^{-\mu_ms}\,ds
&=2I_{m,0,0}-\mu_m I_{m,0,1}.
\end{align*}
The two terms correspond to \(z_{1,b}\) and \(z_{0,b}\), respectively. Set
\[\mathcal{A}_m := 
\begin{pmatrix}
2I_{g_0} & -\mu_m I_{g_0}\\
-\mu_m I_{g_0} & 0_{g_0}
\end{pmatrix},
\quad
\mathcal{V}_m :=
\begin{pmatrix}
 \bigl(\langle Y_m^0, z_{1,b_{0,k}}+Tz_{0,b_{0,k}}\rangle\bigr)_{k=1}^{g_0} \\
 \bigl(\langle Y_m^0,z_{0,b_{0,k}}\rangle\bigr)_{k=1}^{g_0}
\end{pmatrix},
\]
and $\mathcal{O}_{0,m}^T :=\begin{pmatrix}
\mathcal{O}_0 I_{m,0,0} & \mathcal{O}_0 I_{m,0,1}
\end{pmatrix}.$
Then \eqref{eq:critical4} reduces to
$\kappa_m\mathcal A_m\mathcal O_{0,m}
   =e^{-\mu_mT}\mathcal V_m.$
Since
$\det\mathcal A_m=(-\mu_m^2)^{g_0}\neq0,$
we may set
\[
\binom{\mathbf i_{m,0}}{\mathbf i_{m,1}}
 :=\frac{e^{-\mu_mT}}{\kappa_m}
   \mathcal A_m^{-1}\mathcal V_m,
\quad
I_{m,0,q}:=\mathcal R_0\mathbf i_{m,q},
\quad q=0,1.
\]
If \(0\notin\sigma(A)\), we simply prescribe
$I_{m,0,0}=I_{m,0,1}=0.$

\medskip
\noindent\textbf{Step 4: solution of the moment problem and control construction.}
Set
$\mathcal V
:=
\{\pm i\sqrt\gamma:\gamma\in\sigma(A),\ \gamma>0\},$
adding \(0\) when \(0\in\sigma(A)\), and let
$\Lambda_{m,\nu}:=\mu_m-\nu$,
$a_{m,\nu}:=-\Lambda_{m,\nu}.$
With \(I_{m,\nu,-1}:=0\), define
\begin{equation}\label{eq:critical}
d_{m,\nu,q}
:=
a_{m,\nu}^2I_{m,\nu,q}
+2q\,a_{m,\nu}I_{m,\nu,q-1},
\quad q=0,1.
\end{equation}
The estimates established above give
\begin{equation}\label{eq:critical-d}
|d_{m,\nu,q}|
\le C_Te^{-\mu_mT}\mathcal E_0,
\quad
\mathcal E_0
:=
\|u^0\|_{\mathbf X^3}
+\|u^1\|_{\mathbf X^1}.
\end{equation}
Hence, for every \(0<\varepsilon<T\),
$\sum_{m,\nu,q}e^{\varepsilon m^2}|d_{m,\nu,q}|<\infty.$
Lemma~\ref{lem:biorthogo} therefore provides
\(G\in L^2(0,T;\mathbb C^\ell)\) such that
\begin{equation}\label{eq:critical1}
\int_0^Ts^qe^{a_{m,\nu}s}G(s)\,ds
=d_{m,\nu,q},
\quad q=0,1,
\end{equation}
together with
$\int_0^TG(s)\,ds
=
\int_0^T(T-s)G(s)\,ds
=0$, $\|G\|_{L^2}\le C_T\mathcal E_0.$
Define
\begin{equation}\label{eq:critical3}
F(t):=\int_0^t(t-s)G(s)\,ds,
\quad
f(t):=F(T-t).
\end{equation}
Then \(F''=G\). Its definition gives
\(F(0)=F'(0)=0\), while the two auxiliary conditions give
\(F(T)=F'(T)=0\). Thus
$f\in H_0^2(0,T;\mathbb C^\ell).$
Two integrations by parts, followed by comparison with
\eqref{eq:critical}, show successively for \(q=0,1\) that
$\int_0^Ts^qF(s)e^{a_{m,\nu}s}\,ds
=I_{m,\nu,q},$
since \(a_{m,\nu}\neq0\). Hence all the required moment identities hold.
By completeness of the adjoint modes,
\[
u(\cdot,T)=u_t(\cdot,T)=0,
\quad
\|f\|_{H^2}
\le C_T\|G\|_{L^2}
\le C_T\mathcal E_0.
\]
Therefore the system is null controllable for every \(T>0\).
\end{proof}

\section{Scope and limitations}
Condition~{\rm (NR$_1$)} excludes inter-pair spectral
collisions, whose treatment would require a block Hautus
condition for the full generator. Intra-pair double roots and
the small spectral gaps considered here are handled by the
confluent construction and
Lemma~\ref{lem:biorthogonality}. For \(\rho>2\), factorization leads to a parabolic system with
distinct diffusivities and linked boundary inputs \((Bf,Bf')\). In the uncoupled scalar Dirichlet case \(N=\ell=1\), \(A=0\), and \(B=1\), this overdamped regime was analyzed in \cite{AE}: exact collisions between the two temporal branches may preclude even approximate controllability, whereas spectral condensation
may give rise to a strictly positive minimal null-control time. Extending this analysis to the coupled setting would require accounting simultaneously for interbranch collisions and condensation, the spectrum and Jordan structure of \(A\), and the control geometry induced by \(B\), and lies beyond the scope of the present work. At \(\rho=0\), the Gaussian high-frequency decay underlying the moment construction is lost, so the present
argument no longer applies.

\textbf{Funding:} The second author conducted this research during an Alexander von HumboldtFellowship at the Chair for Dynamics, Control, Machine Learning and Numerics, Friedrich-Alexander-Universit\"at Erlangen--N\"urnberg.

\textbf{Conflicts of Interest:}
The authors declare no conflicts of interest.

\end{document}